\documentclass[12pt]{amsart}

\textwidth=5.55in \oddsidemargin=.4in \evensidemargin=0.4in

\usepackage{amssymb}
\usepackage{graphicx}

\usepackage{color}

\newtheorem{theorem}{Theorem}

\newtheorem{proposition}[theorem]{Proposition}
\newtheorem{corollary}[theorem]{Corollary}


\newtheorem{them}{Theorem}
\newtheorem{lema}[them]{Lemma}


\theoremstyle{definition}

\newtheorem{example}[theorem]{Example}

\theoremstyle{remark}
\newtheorem{remark}{Remark}

\begin{document}

\title[]{Hypo-efficient domination and hypo-unique domination }

\author[]{Vladimir Samodivkin}
\address{Department of Mathematics, UACEG, Sofia, Bulgaria}
\email{vl.samodivkin@gmail.com}
\today
\keywords{domination number; efficient domination; unique domination; hypo-property}

\begin{abstract}
For a graph $G$ let $\gamma (G)$ be its domination number. 
We define a graph G to be 
(i)  a  hypo-efficient domination graph (or  a  hypo-$\mathcal{ED}$ graph)  
      if $G$ has no  efficient dominating set (EDS)  but every graph formed
			by removing a single vertex from $G$ has at least one EDS, and 
(ii)  a hypo-unique domination graph (a hypo-$\mathcal{UD}$ graph) if 
      $G$ has at least two  minimum dominating sets,
			but $G-v$ has a unique minimum dominating set  for each $v\in V(G)$. 
We  show that each  hypo-$\mathcal{UD}$ graph $G$ of order at least $3$
 is   connected  and $\gamma(G-v) < \gamma(G)$ for all $v \in V$. 
We obtain a tight  upper bound  on the order of a hypo-$\mathcal{P}$ graph 
in terms of the domination number and maximum degree of the graph, 
where $\mathcal{P} \in \{\mathcal{UD}, \mathcal{ED}\}$. 
 Families of  circulant graphs which achieve these bounds are presented. 
We also prove that the bondage number of any  hypo-$\mathcal{UD}$ graph 
is not more than the minimum degree plus one. 
 \end{abstract}

\maketitle

{\bf  MSC 2010}: 05C69


\section{Introduction}
All graphs considered in this article are finite, undirected, without loops or multiple edges. 
 For the graph theory terminology not presented here, we follow Haynes et al.   \cite{hhs1}. 
We denote the vertex set and the edge set of a graph $G$ by $V(G)$ and $ E(G),$  respectively. 
The {\em complement} $\overline{G}$ of $G$ is the  graph
whose vertex set is $V(G)$ and whose edges are the pairs of
nonadjacent vertices of $G$. 
The {\em join}  of  graphs $G$ and $H$, written $G \vee H$, is the 
graph obtained from the disjoint union of $G$ and $H$ by adding the edges $\{xy \mid x\in V(G), \ y \in V(H)\}$. 
In a graph $G$, for a subset $S \subseteq V(G)$ the {\em subgraph induced} by $S$ is the graph $\left\langle S\right\rangle$ 
with vertex set $S$ and edge set $\{xy \in E(G)\colon x,y \in S\}$. 
 We write $K_n$ for the {\em complete graph} of order $n$ and $C_n$ for a {\em cycle} of length $n$. 
Let $P_m$	 denote the {\em path} with $m$ vertices. 
	For any vertex $x$ of a graph $G$,  $N_G(x)$ denotes the set of all  neighbors of $x$ in $G$,  
	$N_G[x] = N_G(x) \cup \{x\}$ and the {\em degree} of $x$ is $deg_G(x) = |N_G(x)|$. 
		The {\em minimum} and {\em maximum} degree of a graph $G$ are denoted by $\delta(G)$ and $\Delta(G)$, respectively. 
	  A {\em leaf} of a graph is a vertex of degree $1$, while a  {\em support vertex} is a vertex adjacent to a leaf. 
For a subset $A \subseteq V (G)$, let $N_G[A] = \cup_{x \in A} N_G[x]$. 
The {\em coalescence} of disjoint graphs $H$ and $G$  is the graph $H \cdot G$
 obtained by identifying one vertex of $H$ and one vertex of $G$. 

A set $D$ of vertices in a graph $G$  {\it dominates} a vertex $u \in V(G)$ 
if either $u \in D$ or $u$ is adjacent to some $v \in D$.
If $D$ dominates all vertices in a subset $T$ of $V(G)$ we say that $D$ {\it dominates} $T$. 
When $D$ dominates $V(G)$, $D$ is called a {\it dominating set} of the graph $G$. 
 That is, $D$ is a dominating set if and only if $N[D]=V(G)$. 
The {\em domination number} $\gamma (G)$ equals the minimum cardinality of a dominating
set in $G$, and a dominating set of $G$ with cardinality $\gamma (G)$ is called a $\gamma$-{\em set} of $G$. 
 A dominating set $D$ is called an {\it efficient dominating set} (EDS)  
 if $D$ dominates every vertex exactly once (\cite{bbs0}). 
A vertex $v$ of a graph $G$ is $\gamma$-{\em critical} if  $\gamma(G- v) <  \gamma(G)$. 
We denote by $V^-(G)$ the set of all  $\gamma$-critical vertices of $G$. 
A graph $G$ is a {\em vertex domination-critical graph} (or a {\em vc-graph}) if $V^-(G) = V(G)$ (\cite{bcd}). 
   The concept of domination in graphs has many applications to several fields. 
 Domination naturally arises in facility location problems,  
 in problems involving finding sets of representatives, in monitoring communication
or electrical networks, and in land surveying.  Many variants of the basic concepts
of domination have appeared in the literature. We refer to \cite{hhs1,hhs2}  for a survey of the area. 

 Let $\mathcal{I}$ denote the set of all mutually nonisomorphic graphs.
 A  graph property is any non-empty subset of $\mathcal{I} $. 
We say that a  graph $G$ has the  property $\mathcal{P}$ whenever 
there exists a graph $H \in \mathcal{P} $ which is isomorphic to $G$.
Any  set $S \subseteq V(G)$ such that the induced subgraph $G[S]$  possesses
	the property $\mathcal{P}$ is called a $\mathcal{P}$-set.  
	
If a graph $G$ does not possess a given property $\mathcal{P}$, 
and for each vertex $v$ of $G$ the graph $G-v$ has property $\mathcal{P}$,
 then $G$ is said to be a {\it hypo-$\mathcal{P}$ graph}.
This concept is  closely related to the well-known concept of "critical". 
 A graph $G$ is {\it critical with respect to the property} $\mathcal{H}$ if 
$G$ possesses property $\mathcal{H}$ but  for every $v \in V(G)$ 
the graph $G-v$ does not have property $\mathcal{H}$. 
Thus, if $\overline{\mathcal{P}}$ denote the negation of $\mathcal{P}$, 
then a graph $G$ is a hypo-$\mathcal{P}$ graph  if and only if $G$ is critical with respect to 
property $\overline{\mathcal{P}}$. 
Nevertheless,  with some properties, however, it is more natural to consider 
the "hypo" point of view rather than the "critical" approach. 
A number of studies have been made where $\mathcal{P}$ stands for the graph 
being hamiltonian (see \cite{z}  and references therein),
traceable (see \cite{aw}  and references therein), planar(\cite{wa}), outerplanar (\cite{m}),  
eulerian and randomly-eulerian (\cite{k}). 
Let us mention also hypomatchable (or factor-critical)  graphs  
(for a survey up to  2003 see \cite{p}).
 Here  we focus on the case when  $\mathcal{P} \in \{\mathcal{ED}, \mathcal{UD} \}$, where
\begin{itemize}
\item[$\bullet$]  $\mathcal{ED} = \{H \in \mathcal{I}$ : $H$ {\em has an efficient dominating set}$\}$, and 
\item[$\bullet$]   $\mathcal{UD} = \{H \in \mathcal{I}$ : $H$ {\em has exactly one $\gamma$-set}$\}$.
\end{itemize}

More formally, we define:
 \begin{itemize}
\item[$\bullet$]  A graph $G$ is an {\em efficient domination graph} (or  an $\mathcal{ED}${\em-graph})  if $G$ has an EDS (\cite{kpy}).
\item[$\bullet$]  A graph $G$ is a {\em unique domination graph} (or a $\mathcal{UD}${\em-graph})  
                            if $G$ has exactly one $\gamma$-set. 
 \end{itemize}                           
 For results on graphs with a unique minimum dominating set see \cite{fi} and references therein. 
 
 \begin{itemize}                            
\item[$\bullet$]  A graph $G$ is a  {\em hypo-efficient domination graph} (or  an {\em hypo}-$\mathcal{ED}$ {\em graph})  
                            if $G$ has no  EDS but every graph formed by removing a single vertex from $G$ has at least one EDS. 
\item[$\bullet$]  A graph $G$ is a {\em hypo-unique domination graph}  (or a {\em hypo}-$\mathcal{UD}$ {\em graph}) if 
                                 $G$ has at least two $\gamma$-sets, but $G-v$ has a unique minimum dominating set  for each $v\in V(G)$.  
\end{itemize}

The paper is organized as follows. 
Section 2, contains some known results which are necessary to present our results.  
In  Section 3  we prove that each  hypo-$\mathcal{UD}$ graph of order at least $3$ is  a connected vc-graph 
and we obtain sharp upper bounds in terms of (a) domination number, and 
(b) domination number and  maximum degree for the order of a  hypo-$\mathcal{P}$ graph, 
where $\mathcal{P} \in \{\mathcal{UD}, \mathcal{ED}\}$. 
Families of  circulant graphs which achieve these bounds are presented. 
 We also prove that  the bondage number of any  hypo-$\mathcal{UD}$ graph 
is not more than the minimum degree plus one. 
We conclude in Section 4 with some open problems.

\section{Known results}

\begin{them} \label{effs1} \cite{bbs}
 Let $G$ be a graph.
If $G$ has vertex set $V(G) = \{v_1,v_2,..,v_n\}$, then $G$ has an EDS if and only if
 some subcollection  of $\{N[v_1], N[v_2],..,N[v_n]\}$ partitions $V(G)$.
 If $G$ has an EDS then the cardinality of any EDS of $G$ equals the domination number of $G$.
\end{them}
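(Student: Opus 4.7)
The plan is to derive both assertions directly from the defining property of an EDS, namely that a dominating set $D$ is efficient if and only if every vertex $u \in V(G)$ is dominated by exactly one member of $D$, equivalently $|D \cap N[u]| = 1$ for every $u$.

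For the equivalence, the condition $|D \cap N[u]| = 1$ for all $u \in V(G)$ says precisely that the subcollection $\{N[v] : v \in D\}$ of $\{N[v_1], \dots, N[v_n]\}$ both covers $V(G)$ (so each $u$ lies in at least one closed neighbourhood) and consists of pairwise disjoint sets (so each $u$ lies in at most one). That is exactly what it means for this subcollection to partition $V(G)$. Both directions are then one-line observations: given an EDS $D$, the sets $\{N[v] : v \in D\}$ partition $V(G)$; conversely, given such a partition arising from a subset $D \subseteq V(G)$, the set $D$ is dominating (union equals $V(G)$) and uniquely so (pairwise disjointness).

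For the cardinality claim, I would fix an EDS $D$ and any $\gamma$-set $D'$ of $G$. Since $D'$ dominates $V(G)$, every $v \in D$ admits at least one $w_v \in D' \cap N[v]$. Because the sets $N[v]$, $v \in D$, are pairwise disjoint by the partition property just established, the choice $v \mapsto w_v$ is injective from $D$ into $D'$, yielding $|D| \leq |D'| = \gamma(G)$. The reverse inequality is immediate since every EDS is in particular a dominating set, hence $|D| \geq \gamma(G)$.

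I do not anticipate any genuine obstacle here: the whole argument is an unfolding of the phrase ``dominated exactly once'' together with the disjointness of blocks of a partition. The only place where some care is warranted is resisting the temptation to argue the size equality via a numerical identity such as $\sum_{v \in D}|N[v]| = n$; the direct injection $v \mapsto w_v$ above is cleaner and uses nothing beyond $D'$ being a dominating set, so it applies verbatim to every EDS $D$.
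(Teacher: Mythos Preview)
Your argument is correct and is the standard derivation of this classical fact. Note, however, that the paper does not supply its own proof of this statement: it is quoted in the ``Known results'' section with a citation to Bange, Barkauskas and Slater, so there is nothing to compare your approach against. Your proposal stands on its own as a clean self-contained proof.
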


\begin{lema} \label{minus} \cite{bhns}
Let $G$ be a graph and $x,y \in V(G)$. 
If $x$ is  $\gamma$-critical then $\gamma(G-x) = \gamma(G)-1$ and  no vertex in $N_G[x]$ is in a $\gamma$-set of $G-x$.  
If $\gamma(G-y) >  \gamma(G)$ then $y$ is in all $\gamma$-sets of $G$.
\end{lema}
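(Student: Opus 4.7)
The plan is to prove the two implications separately; each rests on a single elementary observation about how dominating sets transform under vertex deletion.

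For the first assertion, I would first establish the universal lower bound $\gamma(G-x)\ge\gamma(G)-1$, which holds for every vertex $x$: given any $\gamma$-set $D$ of $G-x$, the set $D\cup\{x\}$ dominates all of $V(G)$, so $\gamma(G)\le|D|+1=\gamma(G-x)+1$. Combined with the hypothesis $\gamma(G-x)<\gamma(G)$, this immediately forces the equality $\gamma(G-x)=\gamma(G)-1$. For the second half, I would argue by contradiction: if some $\gamma$-set $D$ of $G-x$ contained a vertex $v\in N_G[x]$, then $v$ would dominate $x$ in $G$, so $D$ itself would be a dominating set of $G$ of size $\gamma(G)-1$, contradicting the minimality of $\gamma(G)$.

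For the second assertion, the approach is again a short contradiction. If some $\gamma$-set $D$ of $G$ omits $y$, then $D\subseteq V(G-y)$ and $D$ dominates every vertex of $V(G)\setminus\{y\}=V(G-y)$ (deleting $y$ removes only edges incident to $y$, which are irrelevant to vertices in $D$ and their neighbors other than $y$). Hence $D$ is a dominating set of $G-y$, giving $\gamma(G-y)\le|D|=\gamma(G)$, contradicting $\gamma(G-y)>\gamma(G)$.

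No step presents a real obstacle; the whole lemma reduces to the observations that attaching $x$ to a dominator of $G-x$ yields a dominator of $G$, and that a dominator of $G$ avoiding $y$ is already a dominator of $G-y$. The only small item to keep straight is the use of the closed neighborhood $N_G[x]$ rather than the open neighborhood $N_G(x)$ in the second part of the first assertion, which is what makes the contradiction work uniformly whether the offending vertex of $D$ is $x$ itself or a neighbor of $x$.
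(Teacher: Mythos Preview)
Your argument is correct and is the standard folklore proof of this fact. The paper does not supply a proof of this lemma at all: it is stated in the ``Known results'' section with a citation to Bauer, Harary, Nieminen and Suffel, and is used thereafter without further justification. One tiny imprecision worth flagging: in your closing remark you allow for the possibility that the offending vertex of $D$ is $x$ itself, but since $D\subseteq V(G-x)$ this case cannot occur; the closed neighborhood $N_G[x]$ in the statement is harmless only because $x\notin V(G-x)$, not because the case $v=x$ needs the contradiction argument.
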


Theorem \ref{effs1}  and Lemma \ref{minus}  will be used in the sequel without specific   reference. 

\begin{them} \label{vc1} 
Let $G$ be a vc-graph. Then
\begin{itemize}
\item[(i)]  \cite{bcd}   $G$ is vc-critical if and only if each block of $G$ is vc-critical.
\item[(ii)]  \cite{bcd} $|V(G)| \leq (\Delta (G) +1)(\gamma(G) -1) +1$. 
                                        If the equality holds then \\ \cite{fhmg} $G$ is regular. 
\end{itemize}
 \end{them}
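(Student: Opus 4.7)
The plan is to dispatch the inequality in (ii) by a one-line counting argument, then tackle the regularity claim and the block-decomposition in (i) with more care.

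For the bound in (ii), fix any $v\in V(G)$. Since $G$ is a vc-graph, $v$ is $\gamma$-critical, so by Lemma \ref{minus} any $\gamma$-set $D$ of $G-v$ has size $\gamma(G)-1$ and is disjoint from $N_G[v]$. Hence no vertex of $D$ is adjacent to $v$ in $G$, and therefore $N_G[D]\subseteq V(G)\setminus\{v\}$. Because $D$ dominates $V(G-v)=V(G)\setminus\{v\}$, we get
\[
n-1 \;=\; |V(G)\setminus\{v\}|\;\leq\;|N_G[D]|\;\leq\;\sum_{u\in D}(\deg_G(u)+1)\;\leq\;(\gamma(G)-1)(\Delta(G)+1),
\]
which rearranges to the desired inequality.

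For the regularity assertion, suppose equality holds. Then the chain above is tight for \emph{every} choice of $v$ and every $\gamma$-set $D_v$ of $G-v$, so (a) every $u\in D_v$ has $\deg_G(u)=\Delta(G)$, and (b) the closed neighborhoods $\{N_G[u]:u\in D_v\}$ partition $V(G)\setminus\{v\}$. It therefore suffices to show that every vertex $x\in V(G)$ lies in $D_w$ for some $w$. Given $x$, choose $v_0\neq x$ and locate the unique $u_0\in D_{v_0}$ with $x\in N_G[u_0]$; if $u_0=x$ we are done, otherwise I would use the partition structure to swap roles and consider the $\gamma$-set $D_{u_0}$ of $G-u_0$ (noting $x\notin N_G[u_0]$ now fails, so one must instead pick an appropriate $w\notin N_G[x]$, which exists because the equality case excludes $G=K_n$ unless $n=1$) and argue by the same partition rigidity that $x$ must be a partition center there. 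The delicate point is choosing $w$ so that $x$ is forced into $D_w$ rather than merely dominated by it.

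For (i), I would argue by induction on the number of blocks. For the ``if'' direction, given vc-critical blocks and a vertex $v\in V(G)$, decompose any hypothetical $\gamma$-set of $G$ along the block-cut tree: since each block $B$ containing $v$ is vc-critical, we can replace its contribution by a $\gamma$-set of $B-v$ saving one vertex overall, and patch with unchanged $\gamma$-sets of the remaining blocks to obtain a dominating set of $G-v$ of size $\gamma(G)-1$. For the ``only if'' direction, pick a block $B$ and a vertex $v\in V(B)$; take a $\gamma$-set $D$ of $G-v$ and show that $D\cap V(B)$, possibly augmented by the relevant cut-vertex of $B$, gives a dominating set of $B-v$ strictly smaller than $\gamma(B)$, using the coalescence identity $\gamma(H\cdot G)\in\{\gamma(H)+\gamma(G)-1,\gamma(H)+\gamma(G)\}$ to control how $\gamma$ adds across cut-vertices.

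The main obstacle is the regularity half of (ii): the bookkeeping needed to guarantee that an arbitrary vertex actually appears in some $D_w$ (and not merely as a dominated vertex) is what separates a quick counting argument from a complete proof. A secondary difficulty in (i) is correctly handling a vertex $v$ that is itself a cut-vertex lying in several blocks simultaneously, where the ``savings'' from block-criticality could potentially double-count.
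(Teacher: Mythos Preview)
The paper does not prove this statement at all: Theorem~\ref{vc1} sits in Section~2 (``Known results'') and is simply quoted, with part~(i) and the inequality in~(ii) attributed to Brigham--Chinn--Dutton~\cite{bcd} and the regularity assertion to Fulman--Hanson--MacGillivray~\cite{fhmg}. There is therefore no in-paper argument to compare your proposal against.

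As for the proposal itself: your one-line counting argument for the inequality in~(ii) is the standard one and is fine. But the remainder is, as you yourself flag, not a proof. For the regularity half you need to show that every vertex $x$ lies in some $D_w$, and your sketch does not do this: picking $w\notin N_G[x]$ guarantees only that $x$ is \emph{eligible} to be in $D_w$, not that it actually is; the partition rigidity you invoke says the closed neighbourhoods of the members of $D_w$ tile $V(G)\setminus\{w\}$, but $x$ could perfectly well be a non-centre of one of those tiles. The original proof in~\cite{fhmg} uses additional structural information (about how $\gamma$-sets of the vertex-deleted subgraphs overlap and about the independence of these sets) that your outline has not recovered. For~(i), your ``only if'' direction is also loose: intersecting a $\gamma$-set of $G-v$ with a block $B$ need not give a dominating set of $B-v$ at all, since vertices of $B$ may be dominated from outside $B$ through the cut-vertex, and the coalescence bound you cite does not by itself repair this.
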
 

\begin{remark}\label{2edge}
By Theorem \ref{vc1}(i), if $G$ is a connected nontrivial vc-graph then $G$ is $2$-edge connected and $\delta(G) \geq 2$.   
\end{remark}

The {\em corona} of  graphs $H$ and $K_1$ is the graph $H\circ K_1$ constructed from a copy
 of $H$, where for each vertex $v \in V(H)$, a new vertex $v^\prime$
 and a pendant edge $vv^\prime$ are added. Hence, $H\circ K_1$ has even order.  

\begin{them}\label{0.5}
Let a graph $G$ have no isolated vertices. Then 
(a) \cite{ore} $\gamma(G) \leq |V(G)|/2$ and  
(b) \cite{fjkr, px}  $\gamma(G) = |V(G)|/2$ 
 if and only if the components of $G$ are the cycle $C_4$
 or the corona $H\circ K_1$ for any connected graph $H$. 
\end{them}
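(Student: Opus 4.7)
The plan is to handle parts (a) and (b) separately: (a) via the classical complementation trick, and (b) by combining the extremal structure forced by that trick with a short induction on $|V(G)|$, splitting on whether $G$ has a leaf.

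For (a), I would let $D$ be a $\gamma$-set of $G$ and prove that $V(G)\setminus D$ is also a dominating set. The key claim is that every $v\in D$ has at least one neighbor outside $D$: if not, then since $v$ is not isolated, all of $v$'s neighbors lie in $D\setminus\{v\}$, so $v$ is still dominated after removing it from $D$, and since $v$ has no neighbor in $V\setminus D$, the set $D\setminus\{v\}$ remains a dominating set, contradicting the minimality of $|D|$. Consequently $V(G)\setminus D$ dominates $D$ and trivially dominates itself, giving $|V(G)\setminus D|\geq \gamma(G)$, hence $\gamma(G)\leq |V(G)|/2$.

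For the sufficiency in (b), I would check directly that $\gamma(C_4)=2$ and $\gamma(H\circ K_1)=|V(H)|$ for any connected $H$ (each added leaf has a unique neighbor, forcing its support vertex into every $\gamma$-set and contributing $|V(H)|$ to $\gamma$), then sum over components. For the necessity, assume $\gamma(G)=|V(G)|/2$ and reduce to a single component, since $\gamma$ is additive over components and equality propagates from the total to each piece. For connected $G$ with $\gamma(G)=|V(G)|/2$, I would induct on $|V(G)|$. If $G$ has a leaf $\ell$ with support vertex $s$, then $s$ lies in every $\gamma$-set; setting $G'=G-\{s,\ell\}$, one checks $\gamma(G')=\gamma(G)-1=|V(G')|/2$ (using the argument of (a) to extract a $\gamma$-set of $G'$ of the correct size from one of $G$ containing $s$), so by the induction hypothesis each component of $G'$ is $C_4$ or a corona, and reattaching $\{s,\ell\}$ together with the connectivity of $G$ identifies $G$ as $H\circ K_1$ for some connected $H$. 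If instead $\delta(G)\geq 2$, then the rigid bipartite structure between $D$ and $V\setminus D$ forced by $\gamma(G)=|V(G)|/2$ (each side dominates the other, with matched private-neighbor structure from the proof of (a)) combined with $\delta(G)\geq 2$ pins $G$ down to $C_4$.

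The main obstacle is the $\delta(G)\geq 2$ subcase: one must extract enough rigidity from the equality in (a) to conclude that no connected $2$-edge-connected graph other than $C_4$ can be extremal, which requires a careful analysis of how a $\gamma$-set of size $|V(G)|/2$ partitions $V(G)$ together with its complement. The inductive gluing in the leaf case is also slightly delicate: one has to verify that no component of $G'$ can be a $C_4$ (otherwise reassembly would produce a vertex of $H$ carrying two leaves, contradicting the corona structure), and that no vertex of $G$ outside $\{s,\ell\}$ loses an edge that would disturb the corona identification.
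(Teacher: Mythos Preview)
The paper does not prove this statement at all: it appears in Section~2 (``Known results'') as Theorem~\ref{0.5}, attributed to Ore for part~(a) and to Fink--Jacobson--Kinch--Roberts and Payan--Xuong for part~(b), with no argument given. There is therefore nothing in the paper to compare your attempt against.

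As for the content of your sketch on its own terms: your proof of~(a) is the standard one and is complete. Your outline of~(b), however, is genuinely incomplete in exactly the place you flag. In the leaf case the reattachment step needs more care than you indicate (for instance, if a component of $G'$ is $C_4$, you must argue why $s$ cannot have been adjacent to a vertex of that $C_4$, and in fact the actual proofs in the cited references handle this differently), and in the $\delta(G)\geq 2$ case you have essentially no argument---saying the structure ``pins $G$ down to $C_4$'' is not a proof, and this is precisely the nontrivial part of the characterization. The published proofs (e.g.\ Payan--Xuong) do not proceed quite this way; if you want to complete the argument you should consult those references directly.
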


 Let $\mathcal{A} = \{H_1,..,H_7\}$ be the collection of graphs in Figure \ref{fig:seven}.  
\begin{figure}[htbp]
	\centering
		\includegraphics[width=0.60\textwidth]{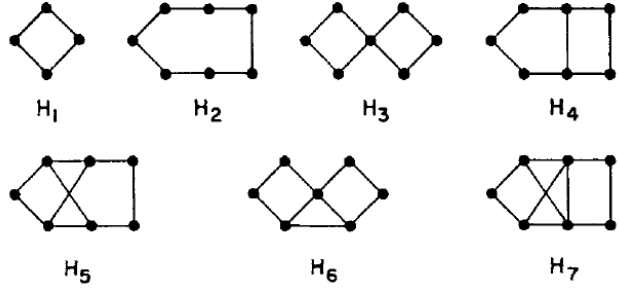}
	\caption{$\delta (H_i) \geq 2$ and $\gamma(H_i) > 2|V(H_i)|/5$, for $i=1,..,7$ (\cite{ms}). }
	\label{fig:seven}
\end{figure}

\begin{them}\cite{ms}\label{2/5}
If $G$ is a connected graph with $\delta(G) \geq 2$ and $G \not\in \mathcal{A}$, 
then $\gamma(G)\leq \frac{2}{5}|V(G)|$. 
\end{them}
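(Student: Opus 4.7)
The plan is to proceed by minimum counterexample on $n=|V(G)|$. Suppose $G$ is connected with $\delta(G)\geq 2$, $G\notin\mathcal{A}$, and $\gamma(G)>\tfrac{2}{5}n$, with $n$ as small as possible. A finite check disposes of the base cases: all connected graphs with $\delta\geq 2$ of order up to (say) $8$ either satisfy the bound or appear among $H_1,\dots,H_7$. From now on we may assume $n$ is larger than this threshold.

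The central mechanism would be a local reduction lemma: identify a small vertex set $X\subseteq V(G)$ with $|X|=k$ together with a set $S\subseteq V(G)$ with $|S|\leq \tfrac{2k}{5}$ such that (i) $S$ dominates $X$, (ii) the graph $G'$ obtained by deleting $X$ (possibly after adding a few auxiliary edges to preserve $\delta\geq 2$ and connectivity) has minimum degree at least $2$, and (iii) $G'$ is not in $\mathcal{A}$. By the minimality of $G$ we get $\gamma(G')\leq \tfrac{2}{5}(n-k)$, and the union of a $\gamma$-set of $G'$ with $S$ is a dominating set of $G$ of size at most $\tfrac{2n}{5}$, a contradiction.

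To build such reductions I would first exploit vertices of degree $2$. If three consecutive vertices $u_2,u_3,u_4$ on a path $u_1u_2u_3u_4u_5$ all have degree $2$ in $G$, then placing $u_3$ and one further well-chosen vertex into the dominating set handles $5$ vertices with $2$ tokens, giving ratio $2/5$, and the remainder of $G$ can be reduced. Analogous reductions handle short cycles through degree-$2$ vertices, two degree-$2$ vertices sharing a common neighbor, a degree-$2$ vertex both of whose neighbors have degree $2$, and vertices of large degree that dominate many private neighbors. At each step I would verify that the reduced graph still satisfies $\delta\geq 2$ and does not accidentally land in $\mathcal{A}$; when it would, this usually means $G$ itself has very restricted structure.

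The main obstacle, and the reason the exceptional family $\mathcal{A}$ is unavoidable, is the final structural classification. After all local reductions are ruled out, the degree-$2$ vertices and the short cycles of $G$ are forced into one of a small number of highly symmetric patterns, and one has to show by a careful case analysis that each such $G$ is isomorphic to some $H_i$. I expect this enumeration, together with the bookkeeping needed to keep $\delta(G')\geq 2$ and $G'$ connected after each reduction, to be the delicate part. In particular, the tightness examples attaining $\gamma=\tfrac{2}{5}n$ (certain circulant-like graphs built from $C_5$'s) must be distinguished from the strictly worse graphs $H_1,\dots,H_7$, and ensuring that no eighth exceptional graph has been missed is where the argument must be conducted most carefully.
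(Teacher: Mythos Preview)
The paper does not prove this theorem at all; it appears in Section~2 (``Known results'') and is simply quoted from McCuaig and Shepherd~\cite{ms} with no argument given. There is therefore no proof in the paper against which your proposal can be compared.

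As for the proposal itself, what you have written is a plausible roadmap rather than a proof, and you say as much. The overall shape---minimal counterexample, local reductions around vertices of degree~$2$ and short paths, then a terminal structural classification yielding the seven exceptional graphs---is indeed the strategy of the original McCuaig--Shepherd paper. But the substance of that proof lies entirely in the parts you label ``delicate'' and leave open: the precise formulation of each reduction, the verification that the reduced graph remains connected with $\delta\ge 2$ and outside $\mathcal{A}$, and the exhaustive case analysis showing that once no reduction applies the graph must be one of $H_1,\dots,H_7$. None of these steps is carried out, and none is routine; the original argument occupies many pages of detailed casework. So your proposal identifies the right architecture but contains no actual proof content, and since the present paper supplies none either, there is nothing further to match up.
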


Let $S = \{n_1, n_2, . . . , n_k\}$ be a set of integers such that 
$0 < n_1 <...<n_k <(n+1)/2$  and let the vertices of an $n$-vertex graph be
labelled $0,1,2,. . . , n - 1$.
Then the {\em circulant graph} $C(n, S)$ has $i \pm n_1, i \pm n_2 ,..., i \pm n_k$ $(\mbox{mod\ } n)$ 
 adjacent to each vertex $i$. 
If $n_k \not= n/2$ then $C(n, S)$ is regular of degree $2k$. 
 When $n_k = n/2$,  $C(n, S)$  is regular of degree $2k - 1$.

\begin{them}\label{circu}
Let $G  = C(n; \{1,2,..,k\})$, where $n \geq 3$ and $1\leq k < \left\lfloor n/2 \right\rfloor$. 
Then (a) \cite{grobler1} $\gamma (G) =  \left\lceil \frac{n}{2k+1} \right\rceil$, and 
(b) \cite{coe} $G$ is vc-critical if and only if $2k+1$ divides $n-1$. 
\end{them}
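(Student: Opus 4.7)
The plan is to exploit the strong symmetry of $G = C(n;\{1,\ldots,k\})$: every closed neighborhood $N_G[i]$ is the window $\{i-k,\, i-k+1,\,\ldots,\, i+k\} \pmod n$ of $2k+1$ consecutive residues, and $G$ is vertex-transitive. For (a), the lower bound $\gamma(G) \geq \lceil n/(2k+1)\rceil$ is immediate, since any dominating set covers $V(G)$ by sets of size at most $2k+1$. For the matching upper bound I would exhibit the explicit set $\{v_0,\ldots,v_{t-1}\}$ with $v_i = (2i+1)k + i$ and $t = \lceil n/(2k+1)\rceil$: a direct check shows $N_G[v_i] = \{i(2k+1),\ldots,(i+1)(2k+1)-1\}$, so the consecutive blocks tile $\{0,1,\ldots,t(2k+1)-1\}$ and, after reduction mod $n$, cover all of $V(G)$ since $t(2k+1) \geq n$.

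For (b), vertex-transitivity reduces the question to deciding when the single vertex $0$ is $\gamma$-critical. Write $n = q(2k+1) + r$ with $0 \leq r \leq 2k$; by (a), $\gamma(G) = q$ when $r = 0$ and $\gamma(G) = q+1$ otherwise. The same window-counting applied inside $G-0$ yields $\gamma(G-0) \geq \lceil (n-1)/(2k+1)\rceil$, which equals $q$ when $r \in \{0,1\}$ and $q+1$ when $r \geq 2$. Thus for $r=0$ and for $r \geq 2$ one has $\gamma(G-0) \geq \gamma(G)$, so $0$ is not critical. When $r=1$, i.e.\ $2k+1 \mid n-1$, I would exhibit the dominating set $D' = \{(i-1)(2k+1) + (k+1) : 1 \leq i \leq q\}$ of $G-0$: its closed neighborhoods in $G$ are the consecutive blocks $\{(i-1)(2k+1)+1,\ldots,i(2k+1)\}$, which partition $V(G)\setminus\{0\}$, giving $\gamma(G-0) \leq q < \gamma(G)$. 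Combined with vertex-transitivity, this proves that $G$ is vc-critical exactly when $2k+1 \mid n-1$.

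The only delicate point is the lower bound $\gamma(G-v) \geq \lceil (n-1)/(2k+1)\rceil$, because removing $v$ can shrink some closed neighborhoods; however, any dominating set $D$ of $G-v$ covers $V(G)\setminus\{v\}$ by the sets $N_G[u]\setminus\{v\}$ for $u \in D$, each of size at most $2k+1$, so the bound persists. With this in hand, the three-case dichotomy becomes transparent and the equivalence in (b) follows at once from (a) together with the two explicit dominating sets constructed above.
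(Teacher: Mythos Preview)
Your argument is correct. Note, however, that the paper does not give its own proof of this theorem: it is quoted in Section~2 (``Known results'') with citations to \cite{grobler1} for part (a) and \cite{coe} for part (b), and is used only as a tool later on. So there is no ``paper's proof'' to compare against; you have supplied a clean self-contained proof where the paper simply invokes the literature.

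A couple of minor remarks on presentation. In the construction for (a), the last centre $v_{t-1}=(t-1)(2k+1)+k$ may exceed $n-1$; it is worth saying explicitly that you reduce the $v_i$ modulo $n$ and that the covering argument survives because the integer interval $\{0,\dots,t(2k+1)-1\}$ already contains a full set of residues modulo $n$. In (b), your lower bound $\gamma(G-0)\ge\lceil(n-1)/(2k+1)\rceil$ is exactly the right observation, and the explicit $q$-element set $D'$ for the case $r=1$ is the efficient dominating set of $G-0$ that the paper alludes to (cf.\ Theorems~\ref{regiff} and~\ref{vced=ud}); the three-case split on $r$ is the natural and standard way to finish.
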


One measure of stability of the domination number is the bondage number. 
The bondage number $b(G)$ of a nonempty graph $G$ is the minimum number of
 edges whose removal of which from $G$ results in a graph with larger domination number. 
Recently, in 2013, Xu \cite{xu} gave a review article on bondage numbers. 

\begin{lema}\label{b=1}    \cite{t1}
If $G$ is a nontrivial graph with a unique minimum dominating set, then $b(G) = 1$.
\end{lema}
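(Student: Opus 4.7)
Let $D$ denote the unique $\gamma$-set of $G$. The plan is to exhibit a single edge whose deletion strictly raises the domination number. The key observation is that $G-e$ is a spanning subgraph of $G$ for every edge $e$, so any dominating set of $G-e$ is automatically a dominating set of $G$. Consequently, if $\gamma(G-e) = \gamma(G)$ for some edge $e$, then any $\gamma$-set $D'$ of $G-e$ would also be a $\gamma$-set of $G$, forcing $D' = D$ by uniqueness; in particular $D$ would dominate $G-e$. Hence it suffices to exhibit an edge $e$ for which $D$ \emph{fails} to dominate $G-e$, and we will then conclude $b(G)=1$.

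I will look for an edge $e = vw$ with $v \in D$, $w \in V(G)\setminus D$, and $N_G(w) \cap D = \{v\}$; call such a $w$ an \emph{external private neighbor} of $v$ with respect to $D$. For such an edge, the deletion of $vw$ leaves $w \notin D$ with no neighbor in $D$, so $D$ no longer dominates $w$, which is all we need.

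The main obstacle is proving that some $v \in D$ has an external private neighbor. I will argue by contradiction: suppose every vertex of $V(G)\setminus D$ has at least two neighbors in $D$. First, $D$ must be independent, for if $v,u \in D$ were adjacent then $D \setminus \{v\}$ would still dominate $G$ (the vertex $v$ is dominated by $u$, every external vertex retains a second $D$-neighbor, and every other vertex of $D$ dominates itself), contradicting the minimality of $|D|$. Next, $V(G)\setminus D$ is nonempty (otherwise $D=V(G)$ is independent and $G$ is edgeless, making $b(G)$ undefined), so because $D$ dominates every external vertex there exists $v \in D$ with a neighbor $w \in V(G)\setminus D$. By the standing assumption $w$ has another $D$-neighbor $u \neq v$, so
\[
D^{*} \;=\; (D \setminus \{v\}) \cup \{w\}
\]
has size $\gamma(G)$ and is distinct from $D$. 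A routine check shows $D^{*}$ dominates $G$: $v$ is dominated by $w \in D^{*}$; every vertex $y \in N(v) \setminus (D \cup \{w\})$ has a $D$-neighbor other than $v$ by hypothesis; the vertex $w$ lies in $D^{*}$; and every remaining vertex keeps whatever dominator it had in $D \setminus \{v\} \subseteq D^{*}$. This contradicts the uniqueness of $D$ and establishes the existence of $v$ and its external private neighbor $w$.

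With $v$ and $w$ in hand, removing the edge $vw$ yields a graph in which $D$ is no longer a dominating set; by the opening observation, $\gamma(G-vw) > \gamma(G)$, so $b(G)=1$.
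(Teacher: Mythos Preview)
Your proof is correct. Note, however, that the paper does not supply its own proof of this lemma: it is quoted from Teschner~\cite{t1} in the ``Known results'' section and used later as a black box, so there is nothing in the paper to compare against directly.

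On the substance, your argument is the standard one and is cleanly executed: you show that the unique $\gamma$-set $D$ must contain a vertex $v$ with an external private neighbor $w$ (otherwise the swap $(D\setminus\{v\})\cup\{w\}$ produces a second $\gamma$-set), and then observe that deleting the edge $vw$ destroys $D$ as a dominating set, which by uniqueness forces $\gamma(G-vw)>\gamma(G)$. One minor remark on economy rather than correctness: the independence of $D$ is not actually needed for the swap verification (your check that $D^{*}$ dominates never uses it), and the case $D=V(G)$ can be excluded directly from minimality of $D$---any edge $uv$ would make $V(G)\setminus\{u\}$ dominating---without first establishing independence.
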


\newpage

\section{Hypo-unique and hypo-efficient domination}

We begin with results on hypo-$\mathcal{UD}$ graphs. 
Our first theorem shows that each  hypo-$\mathcal{UD}$ graph of order at least $3$ is  a connected vc-graph.

\begin{theorem}\label{udvc}
If $G$ is a hypo-$\mathcal{UD}$ graph then either $G=K_2$ or 
$G$ is a connected vc-graph with $|V(G)| \geq 4$. 
\end{theorem}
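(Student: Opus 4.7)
My plan is to dispose of the small orders, prove connectedness, establish the vc-graph property, and derive the lower bound $|V(G)|\geq 4$. First, $K_1$ has a unique $\gamma$-set and so cannot be hypo-$\mathcal{UD}$, and among order-two graphs only $K_2$ qualifies ($K_2$ has $\gamma$-sets $\{v_1\},\{v_2\}$ while $K_2-v=K_1$ has a unique $\gamma$-set). Assuming henceforth $|V(G)|\geq 3$, I establish connectedness by contradiction: if $G$ had components $G_1,\ldots,G_k$ with $k\geq 2$, its $\gamma$-sets would factor as unions of $\gamma$-sets of the components, so some $G_j$ would carry at least two $\gamma$-sets, and removing any vertex from a different component would leave $G_j$ intact in $G-v$, giving $G-v$ at least two $\gamma$-sets and contradicting uniqueness.

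The heart of the proof is showing every $v\in V(G)$ is $\gamma$-critical. My central auxiliary observation is: if $D_1\neq D_2$ are two distinct $\gamma$-sets of $G$ and $y\in V(G)\setminus(D_1\cup D_2)$, then both $D_i$ are dominating sets of $G-y$ of size $\gamma(G)$, so uniqueness of the $\gamma$-set of $G-y$ forces $\gamma(G-y)<\gamma(G)$, whence $y$ is $\gamma$-critical. Using this I first prove that for every $v$ some $\gamma$-set of $G$ avoids $v$. Assuming otherwise, I fix two distinct $\gamma$-sets $D_1,D_2$ containing $v$. If $v$ is critical, then $D^v\cup\{v\}$ is a $\gamma$-set, and for any other $\gamma$-set $D'$ the failure of $D'\setminus\{v\}$ to dominate $G-v$ (else $D'\setminus\{v\}=D^v$) yields a private neighbor $y$ of $v$ in $D'$; Lemma \ref{minus} places $y$ outside $D^v\cup\{v\}$, and together with $y\notin D'$ one invokes the observation to obtain $D^y\cup\{y\}$ as a $\gamma$-set avoiding $v$, a contradiction. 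In the non-critical case the same private-neighbor argument applied to $D_1,D_2$, combined with the structural fact $D^v\cap N_G(v)=\emptyset$ in the neutral subcase (for otherwise $D^v$ would already be a $\gamma$-set avoiding $v$), locates a $y$ outside $D_1\cup D_2$ after case analysis on whether private neighbors of $v$ in one set lie in the other.

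With some $\gamma$-set avoiding each $v$, $\gamma(G-v)\leq\gamma(G)$, and to exclude equality I suppose $v$ is neutral. Then $D^v$ is a $\gamma$-set of $G$ and the unique one avoiding $v$. Picking $D'\neq D^v$ and $u\in D^v\setminus D'$, the same machinery applies to $u$: either $u$ is critical, in which case choosing $u$ adjacent to $v$ (possible since $D^v$ dominates $v$, so $D^v\cap N_G(v)\neq\emptyset$) combined with Lemma \ref{minus} forces $D^u\cup\{u\}=D^v$, yielding $N_G[u]\cap D^v=\{u\}$, a structural constraint that conflicts with $D^v$ being a $\gamma$-set in that its elements must have private neighbors; or $u$ is neutral with $D^u=D'$, and iteration, possibly through a third $\gamma$-set forced by applying the first claim to suitable vertices, produces a vertex to which the observation applies. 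I anticipate this neutral sub-case to be the hardest part, since the interlocking private-neighbor structure among $\gamma$-sets can cycle and may require simultaneous tracking of three or more $\gamma$-sets.

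Finally, $G$ being a connected nontrivial vc-graph gives $\delta(G)\geq 2$ by Remark \ref{2edge}. If $|V(G)|=3$, this forces $G=K_3$, but $K_3-v=K_2$ has two $\gamma$-sets so $K_3$ is not hypo-$\mathcal{UD}$; hence $|V(G)|\geq 4$, completing the proof.
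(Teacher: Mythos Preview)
Your connectedness argument and the central observation (any vertex outside two distinct $\gamma$-sets must be $\gamma$-critical) are both correct, and the latter is in fact exactly what the paper uses to dispatch the case $V^-(G)=\emptyset$: it forces $V(G)=D_1\cup D_2$, hence $2\gamma(G)\geq |V(G)|$, after which Theorem~\ref{0.5} and a short analysis of coronas finish. So you have the right ingredient for that half.

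The genuine gap is in your treatment of the case where some vertex is not critical. Two concrete problems:
\begin{itemize}
\item In the step where $v$ is neutral, $D^v$ is the unique $\gamma$-set avoiding $v$, and you pick $u\in D^v\setminus D'$: you claim you may take $u$ adjacent to $v$. But you only know $D^v\cap N_G(v)\neq\emptyset$; nothing prevents $D^v\cap N_G(v)\subseteq D'$, in which case no such $u$ exists.
\item Even granting such a $u$, your conclusion ``$N_G[u]\cap D^v=\{u\}$ \ldots\ conflicts with $D^v$ being a $\gamma$-set'' is simply false: an element of a minimum dominating set may very well have no other set members in its closed neighbourhood. No contradiction arises here.
\end{itemize}
You also explicitly concede that the remaining neutral sub-case ``may require simultaneous tracking of three or more $\gamma$-sets'' and leave it open. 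So the heart of the theorem---that every vertex is $\gamma$-critical---is not proved.

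The paper's route through Case~1 avoids all of this. Starting from \emph{any} critical vertex $x$, it observes that $D_x\cup\{y\}$ is a $\gamma$-set for \emph{every} $y\in N_G[x]$; this immediately produces enough $\gamma$-sets to force (via your own observation) $V(G)\setminus(D_x\cup N_G(x))\subseteq V^-(G)$ and, when $\deg(x)\geq 2$, also $N_G[x]\subseteq V^-(G)$. Connectivity then propagates criticality everywhere unless all critical vertices are leaves, and that residual situation is killed by recognising a corona structure and invoking Theorem~\ref{0.5}. I would recommend abandoning the private-neighbour iteration and using this direct ``$D_x\cup\{y\}$ for $y\in N[x]$'' mechanism instead.
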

\begin{proof}
Let us assume that $G$ is not connected. Then $G$ has at least $2$ connected components, 
say  $G_1$ and $G_2$. Let $v_i \in V(G_i)$, $i=1,2$. 
Since each of $G-v_1$ and $G-v_2$  either is order-zero graph or has a unique $\gamma$-set, 
$G$ has exactly one $\gamma$-set, which is a contradiction.
Thus $G$ is connected.

To proceed we need the following claim.

\noindent
{\bf Claim 1.} \ 
 If $G = H\circ K_1$,  where $H$  is a  connected graph of order at least $2$, 
 then $V^-(G) = V(G)-V(H)$ and $G$ is not a hypo-$\mathcal{UD}$ graph.  
\begin{proof}[Proof of Claim 1]
Recall that $\gamma (G) = |V(G)|/2$ for any corona $G$ (Theorem \ref{0.5}). 
If $x \in V(H)$ and $y$ is the leaf neighbor of $x$ then (a) $V(H-x)$ is a $\gamma$-set of $G-y$, 
which implies $V(G)-V(H) \subseteq V^-(G)$, and 
(b) $G-x$ is disjoint union of $K_1$ and $ (H-x)\circ K_1$ which leads to $\gamma (G-x) = \gamma (G)$. 
Thus,  $V^-(G) = V(G)-V(H)$. 
Since $V(G)-V(H)$ and $\{y\} \cup V(H-x)$ are $\gamma$-sets of $G-x$, $G$ is not a hypo-$\mathcal{UD}$ graph.
\end{proof}

{\it Case} 1:  $V^-(G) \not= \emptyset$.  
For any  $x \in V^-(G)$ let $D_x$ be the unique $\gamma$-set of $G-x$. 
Then $D_x\cup \{y\}$ is a $\gamma$-set of $G$ for every $y \in N[x]$. 
This implies that $\gamma (G-z) \leq \gamma (G)$ for any $z \in V(G) - D_x$, in particular when $z \in N[x]$. 
Now since $G$ is a hypo-$\mathcal{UD}$ graph, 
(a) $V(G) - (D_x \cup N(x)) \subseteq V^-(G)$, and 
(b) if $x \in V^-(G)$ and $deg(x) \geq 2$ then $N[x] \subseteq V^-(G)$. 

From (b) we conclude that, 
if $G$ has a $\gamma$-critical vertex of degree at least $2$ then $V^-(G) = V(G)$, as required. 
So, let each $\gamma$-critical vertex of $G$ be a leaf. 
Let $x \in V^-(G)$ and $N(x) = \{y\}$.  Since $x$ is a leaf,  $y \not \in V^-(G)$. 
Since $D_x$ is the unique $\gamma$-set of $G-x$, there is no leaf in $D_x$. 
Now by (a),  $D_x \cup \{y\}$ and $V^-(G)$  form a partition of $V(G)$. 
As $D_x \cup \{y\}$ is a $\gamma$-set of $G$, $V^-(G)$ is a dominating set of $G$. 
This implies that each element of $D_x \cup \{y\}$ is adjacent to a leaf. 
Assume that  there is a vertex $z \in D_x$ which is adjacent to at least $2$ leaves.  
	Then $z$ is in all $\gamma$-sets of $G$ which implies that 
	all leaf neighbors of $z$ are outside $V^-(G)$, a contradiction. 
	Thus, $G$ is a corona of a connected graph of order at least $2$. 
	But this is again a contradiction because of Claim 1.

{\it Case} 2: $V^-(G) = \emptyset$. Since $G$ is a hypo-$\mathcal{UD}$ graph, 
there are at least $2$ different $\gamma$-sets of $G$, say $D_1$ and $D_2$. 
If there is $x \in V(G) - (D_1 \cup D_2)$ then since $\gamma(G-x) = \gamma(G)$, 
both $D_1$ and $D_2$ are $\gamma$-sets of $G-x$ - a contradiction.  
Hence $D_1 \cup D_2 = V(G)$ which implies $2\gamma(G) \geq |V(G)|$. 
 By Theorem \ref{0.5},  $2\gamma(G) = |V(G)|$ and either $G$ is a connected corona or $G=C_4$. 
Now Claim 1 and $V(C_4) = V^-(C_4)$ together lead to  $G = K_2$. 
Clearly $K_2$ is a hypo-$\mathcal{UD}$ graph.
\end{proof}
Not all vc-critical graphs are hypo-$\mathcal{UD}$ graphs. 
For example any coalescence $C_{3k+1} \cdot C_{3l+1}$ is
 a vc-critical graph which is not a hypo-$\mathcal{UD}$ graph.
\begin{corollary}\label{minedge}
If $G$ is a hypo-$\mathcal{UD}$ graph of order $n \geq 3$ then $G$ is $2$-edge connected and 
$\delta (G) \geq 2$. Moreover,  
all 	hypo-$\mathcal{UD}$ unicyclic graphs  are $C_{3k+1}$, $k \geq 1$. 
\end{corollary}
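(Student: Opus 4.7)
My plan is to handle the two assertions of the corollary separately. The first, concerning $2$-edge connectivity and $\delta(G)\geq 2$, is immediate: Theorem~\ref{udvc} tells us any hypo-$\mathcal{UD}$ graph $G$ of order at least $3$ is a connected vc-graph with $|V(G)|\geq 4$, and Remark~\ref{2edge} then supplies both conclusions in a single line.

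For the characterization of unicyclic hypo-$\mathcal{UD}$ graphs I intend to combine a structural reduction with a converse check. Since $\delta(G)\geq 2$ by the first part, a unicyclic hypo-$\mathcal{UD}$ graph $G$ has no leaves and is therefore itself a cycle $C_n$. Being vc-critical by Theorem~\ref{udvc} and equal to the circulant $C(n;\{1\})$, $G$ must satisfy $3\mid n-1$ by Theorem~\ref{circu}(b), so $n=3k+1$ with $k\geq 1$ (the case $k=0$ gives $C_1$, which is excluded by the order hypothesis $n\geq 3$).

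To finish, I need to verify the converse: each $C_{3k+1}$ with $k\geq 1$ is in fact hypo-$\mathcal{UD}$. Labelling the vertices cyclically $v_0,\ldots,v_{3k}$, the set $\{v_0,v_3,v_6,\ldots,v_{3k-3},v_{3k-1}\}$ and its shift by $1$ are two distinct dominating sets of cardinality $k+1=\gamma(C_{3k+1})$, so $C_{3k+1}$ has more than one $\gamma$-set. Since removing any vertex yields a copy of $P_{3k}$, the remaining task is to show that $P_{3k}$ has a unique minimum dominating set.

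This last uniqueness claim is the only real obstacle, and my plan is to prove it by induction on $k$ via a simple counting argument. A $\gamma$-set $S$ of $P_{3k}$ has exactly $k$ vertices; to dominate the endpoint $u_1$ it must contain $u_1$ or $u_2$, but if $u_1\in S$ then the remaining $k-1$ vertices would have to dominate the induced path on $u_3,\ldots,u_{3k}$, whose domination number is $\lceil(3k-2)/3\rceil=k$, a contradiction. Hence $u_2\in S$, and an identical count rules out $u_3\in S$, so $S\setminus\{u_2\}$ is forced to be a $\gamma$-set of the induced $P_{3k-3}$ on $u_4,\ldots,u_{3k}$. The induction hypothesis then pins $S$ down uniquely as $\{u_2,u_5,\ldots,u_{3k-1}\}$; everything else in the argument is routine bookkeeping.
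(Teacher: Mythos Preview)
Your proof is correct and follows the same overall skeleton as the paper: use Theorem~\ref{udvc} and Remark~\ref{2edge} for the first assertion, then observe that a unicyclic graph with $\delta\geq 2$ is a cycle and analyze $C_n-v\cong P_{n-1}$.

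The one genuine difference is in how you pin down $n=3k+1$. The paper simply records the (elementary) fact that among all paths $P_m$ with $m\geq 2$, precisely the $P_{3k}$ have a unique $\gamma$-set; this single observation settles both necessity and sufficiency at once, since $C_n$ (being vertex-transitive of order $\geq 4$) automatically has several $\gamma$-sets. You instead split the work: for necessity you invoke Theorem~\ref{circu}(b) on circulants to force $3\mid n-1$, and for sufficiency you supply an explicit inductive proof that $P_{3k}$ has a unique $\gamma$-set. Your route is a bit longer --- the detour through the circulant criterion is not needed once you have the path fact --- but it has the virtue of actually \emph{proving} the uniqueness for $P_{3k}$ rather than asserting it. One small point of care in your induction: when you rule out $u_1\in S$, you should note (by minimality) that $u_1$ and $u_2$ cannot both lie in $S$, so that $S\setminus\{u_1\}$ genuinely sits inside $\{u_3,\ldots,u_{3k}\}$; the analogous remark applies when you exclude $u_3$. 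With that bookkeeping made explicit, the argument is complete.
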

\begin{proof}
By Theorem \ref{udvc}, $G$ is a vc-graph. 
Now by Remark \ref{2edge}, $G$ is $2$-edge connected and $\delta (G) \geq 2$. 
Hence if $G$ is unicyclic then $G = C_n$. 
Since all paths $P_m$, $m \geq 2$, having a unique minimum dominating set are 
$P_{3k}$, $k \geq 1$,  it follows that   
$G = C_n$ is a hypo-$\mathcal{UD}$ graph  if and only if  $n = 3k+1$.
\end{proof}
\begin{corollary}\label{min2v}
If $G$ is a hypo-$\mathcal{UD}$ graph of order at least $3$ then 
\begin{itemize}
\item[(i)]  For any $x \in V(G)$, the graph $G-x$ has no $\gamma$-critical vertices.
\item[(ii)] For any pair $x,y$ of  vertices of $G$, $\gamma(G-\{x,y\})  \geq \gamma(G) -1$. 
                    The equality holds at least when $y$ does not belong to the unique $\gamma$-set of $G-x$. 
\end{itemize} 
\end{corollary}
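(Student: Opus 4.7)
My plan is to prove (i) by contradiction and then deduce (ii) as a short consequence. By Theorem \ref{udvc}, $G$ is a connected vc-graph, so $\gamma(G-v)=\gamma(G)-1$ for every $v\in V(G)$, and the hypo-$\mathcal{UD}$ hypothesis gives a unique $\gamma$-set of $G-v$, which I denote $D_v$.

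Suppose toward contradiction that (i) fails: fix $x\in V(G)$ and let $y$ be $\gamma$-critical in $G-x$, so $\gamma(G-\{x,y\})=\gamma(G)-2$. Let $D$ be a $\gamma$-set of $G-\{x,y\}$. Then $D\cup\{y\}$ is a minimum dominating set of $G-x$ and $D\cup\{x\}$ is a minimum dominating set of $G-y$, so by uniqueness $D\cup\{y\}=D_x$ and $D\cup\{x\}=D_y$. Applying Lemma \ref{minus} to $x$ and $y$ as $\gamma$-critical vertices of $G$ yields $N_G[x]\cap D_x=\emptyset$ and $N_G[y]\cap D_y=\emptyset$, which unpack to $xy\notin E(G)$ and $D\cap(N_G(x)\cup N_G(y))=\emptyset$.

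The heart of the proof---and the step I expect to be the main obstacle---is extracting a contradiction from this structural picture. I will pick any $v\in N_G(x)$, which exists since $\delta(G)\geq 2$ by Corollary \ref{minedge}, and examine the $\gamma$-set $T=D_x\cup\{v\}=D\cup\{y,v\}$ of $G$. The set $T\setminus\{y\}=D\cup\{v\}$ has size $\gamma(G)-1$; if it dominated $G-y$, then by uniqueness it would equal $D_y=D\cup\{x\}$, forcing $v=x$, which is impossible. Hence some $u\in V(G)\setminus\{y\}$ is not dominated by $D\cup\{v\}$; since $T$ still dominates $u$, the vertex $y$ must be the unique element of $T$ in $N_G[u]$. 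This forces $u\in N_G(y)$, $u\notin D$, and $N_G(u)\cap D=\emptyset$. But $x\notin N_G(y)$ gives $u\neq x$, so $u\in V(G-\{x,y\})$ must be dominated by $D$, yielding $N_G(u)\cap D\neq\emptyset$---contradiction.

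For (ii), the inequality $\gamma(G-\{x,y\})\geq\gamma(G)-1$ is just (i) rephrased: $y$ is not $\gamma$-critical in $G-x$, so $\gamma((G-x)-y)\geq\gamma(G-x)=\gamma(G)-1$. For the equality clause, if $y\notin D_x$ then $D_x\subseteq V(G-\{x,y\})$ and still dominates this vertex set (since $D_x$ dominates $V(G)\setminus\{x\}\supseteq V(G-\{x,y\})$), so $D_x$ is a dominating set of $G-\{x,y\}$ of size $\gamma(G)-1$, forcing $\gamma(G-\{x,y\})\leq\gamma(G)-1$ and hence equality.
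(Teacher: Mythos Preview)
Your proof is correct, but it takes a noticeably longer route to (i) than the paper does. The paper's argument is a two-line observation working entirely inside $G-x$: if $u$ is $\gamma$-critical in $G-x$ and $D$ is any $\gamma$-set of $G-\{x,u\}$, then (by Lemma~\ref{minus} applied to $G-x$) $N_{G-x}[u]\cap D=\emptyset$, so $D\cup\{v\}$ is a $\gamma$-set of $G-x$ for \emph{every} $v\in N_{G-x}[u]$; since $\delta(G-x)\geq 1$ this set has at least two elements, contradicting uniqueness of the $\gamma$-set of $G-x$. In contrast, you bring in the second vertex-deleted graph $G-y$ and its unique $\gamma$-set $D_y$, pick a neighbor $v$ of $x$, and chase a witness vertex $u$ through a domination argument. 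This works, but the detour through $D_y$ and the auxiliary $\gamma$-set $T$ of $G$ is unnecessary: the contradiction is already available from the single hypothesis that $G-x$ has a unique $\gamma$-set, without ever invoking uniqueness in $G-y$ or the fact that $y\in V^-(G)$. Your argument for (ii) matches the paper's.
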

\begin{proof}
If $x \in V(G)$ then $\gamma(G-x) = \gamma (G) -1$ (by Theorem \ref{udvc}). 
Assume that there is $u \in V^-(G-x)$. Then for any $v \in N_{G-x}[u]$ and any $\gamma$-set $D$ of $G-\{x,u\}$,
 the set  $\{v\} \cup D$ is a $\gamma$-set of $G-x$.  
 Since $G-x$ has exactly one $\gamma$-set
 and $\delta (G-x) \geq 1$ (by Corollary \ref{minedge}), we arrive to a contradiction.  
Thus, (i) holds and  for any pair $x,y$ of  vertices of $G$, $\gamma(G-\{x,y\})  \geq \gamma(G) -1$. 
Finally,  since  the removal  of a vertex which  belongs to no $\gamma$-set
 of a graph has no effect on the domination number, 
 $\gamma(G-\{x,y\})  = \gamma(G) -1$ whenever $y$ does not belong to the unique $\gamma$-set of $G-x$. 
\end{proof}

\begin{proposition}\label{vcbound}
Let $G$ be a connected vc-graph of order $n \geq 4$. 
 Then $\gamma (G) \leq \left\lfloor 2n/5\right\rfloor +1$. 
The equality holds  if and only if $G \in \mathcal{A}$.
\end{proposition}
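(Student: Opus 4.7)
The plan is to deduce the upper bound from Theorem \ref{2/5} combined with Remark \ref{2edge}, and then pin down the equality case by a dichotomy on whether $G$ lies in the sporadic family $\mathcal{A}$ or not.

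First, since $G$ is a connected nontrivial vc-graph, Remark \ref{2edge} gives $\delta(G)\ge 2$, so the hypotheses of Theorem \ref{2/5} are in force. I split into two cases. If $G\notin\mathcal{A}$, then Theorem \ref{2/5} yields $\gamma(G)\le 2n/5$, whence $\gamma(G)\le \lfloor 2n/5\rfloor <\lfloor 2n/5\rfloor+1$; in particular, equality in the proposition cannot occur in this case, which already settles the ``only if'' direction of the equality statement. If $G\in\mathcal{A}$, I would verify by direct inspection of the seven graphs $H_1,\dots,H_7$ in Figure \ref{fig:seven} that $\gamma(H_i)=\lfloor 2|V(H_i)|/5\rfloor+1$; combined with the displayed inequality $\gamma(H_i)>2|V(H_i)|/5$ from the figure caption and the integrality of $\gamma$, this is the same as checking a single numerical value for each of the seven graphs (whose orders are all small).

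For the ``if'' direction of the equality case, it remains to confirm that every $H_i\in\mathcal{A}$ is actually a connected vc-graph of order at least $4$, so that it qualifies as a witness under the hypothesis of the proposition. I would check this by a short case analysis: each $H_i$ is connected with $\delta(H_i)\ge 2$ (from the figure), and vc-criticality can be verified vertex-by-vertex by exhibiting, for each $v\in V(H_i)$, a dominating set of $H_i-v$ of size $\gamma(H_i)-1$. Combined with the computation $\gamma(H_i)=\lfloor 2|V(H_i)|/5\rfloor+1$, this supplies the sharpness of the bound.

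The only real obstacle is the bookkeeping for the seven specific graphs in $\mathcal{A}$: one must compute $\gamma(H_i)$, verify the $+1$ over $\lfloor 2|V(H_i)|/5\rfloor$, and confirm vc-criticality. There is no conceptual difficulty here beyond a careful finite check. Everything else reduces to an immediate application of Theorem \ref{2/5}, since the key observation is simply that $\lfloor 2n/5\rfloor+1>2n/5$, so a vc-graph attaining $\gamma(G)=\lfloor 2n/5\rfloor+1$ is forced, by the contrapositive of Theorem \ref{2/5}, into the exceptional family $\mathcal{A}$.
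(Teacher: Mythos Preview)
Your proposal is correct and follows essentially the same route as the paper's own proof: use Remark \ref{2edge} to obtain $\delta(G)\ge 2$, apply Theorem \ref{2/5} to force $\gamma(G)\le \lfloor 2n/5\rfloor$ whenever $G\notin\mathcal{A}$, and handle the equality characterization by a direct finite check that each $H_i\in\mathcal{A}$ is a vc-graph with $\gamma(H_i)=\lfloor 2|V(H_i)|/5\rfloor+1$. The paper condenses the verification of the seven graphs to ``it is easy to check,'' while you spell out more explicitly what needs to be checked, but the argument is the same.
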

\begin{proof}
It is easy to check that 
 if $G \in \mathcal{A}$ then $G$ is a vc-graph and $\gamma (G) = \left\lfloor 2n/5\right\rfloor + 1$. 
By Remark \ref{2edge}, if $G$ is a vc-graph then $\delta(G) \geq 2$. 
Now by  Theorem \ref{2/5} we have $\gamma (G) \leq \left\lfloor 2n/5\right\rfloor$ when $G \not \in \mathcal{A}$.  
\end{proof}

\begin{proposition}\label{obud}
Let $G$ be a hypo-$\mathcal{UD}$ graph of order $n$. 
Then $1 \leq \gamma (G) \leq \left\lfloor 2n/5\right\rfloor +1$. Furthermore, 
(i) $\gamma(G) = 1$ if and only if $G=K_2$,  
(ii) $\gamma(G) = 2$ if and only if $n \geq 4$ is even and  $G$ is $K_{n}$   minus a perfect matching,   and 
(iii)   $\gamma (G) = \left\lfloor  2n/5\right\rfloor +1$ if and only if $G \in \{K_2, C_4,C_7\}$.
\end{proposition}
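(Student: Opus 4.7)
The plan is to dispatch the bounds and parts (i), (iii) quickly and concentrate on (ii). The upper bound is immediate: for $G=K_2$ it reads $1\le 1$, and for $n\ge 3$ Theorem~\ref{udvc} makes $G$ a connected vc-graph with $n\ge 4$, so Proposition~\ref{vcbound} applies; the lower bound is trivial. For (i), if $\gamma(G)=1$ and $n\ge 3$, Theorem~\ref{udvc} would force $\gamma(G-v)<1$ for every $v$, which is absurd, so $n\le 2$ and hence $G=K_2$.

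For (iii), I would first check by direct inspection that $K_2$, $C_4$, and $C_7$ are hypo-$\mathcal{UD}$ and satisfy $\gamma(G)=\lfloor 2n/5\rfloor+1$. Conversely, assuming $\gamma(G)=\lfloor 2n/5\rfloor+1$, part (i) handles $n=2$; for $n\ge 3$, Theorem~\ref{udvc} together with Proposition~\ref{vcbound} pins down $G\in\mathcal{A}=\{H_1,\ldots,H_7\}$, and a case-by-case examination discarding every $H_i$ that fails hypo-$\mathcal{UD}$ leaves only $C_4$ and $C_7$. This is essentially bookkeeping using Figure~\ref{fig:seven}.

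The heart of the proof is (ii), and this is the step I expect to be the main obstacle. The reverse direction is a direct check: in $K_n-M$ any two distinct vertices have distinct unique non-neighbors, so every pair dominates, giving $\gamma(G)=2$ with $\binom{n}{2}$ $\gamma$-sets, and in $G-v$ the $M$-partner of $v$ becomes the unique universal vertex, providing the unique $\gamma$-set of $G-v$. For the forward direction I would pass to the complement $\bar G$. Since $\gamma(G)=2$, $\bar G$ has no isolated vertex. The hypo-$\mathcal{UD}$ condition becomes: for every $v\in V(\bar G)$, $\bar G-v$ has a unique isolated vertex, and such a vertex must be a pendant of $\bar G$ whose only neighbor is $v$. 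Hence every vertex of $\bar G$ has exactly one pendant neighbor.

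From this complement reformulation the structural conclusion is short. Applied to a pendant $p\in V(\bar G)$ with sole neighbor $w$, the condition forces $w$ itself to be a pendant, so $p$ lies in a $K_2$ component. But if $\bar G$ contained a non-pendant $v$, its pendant neighbor $p$ would be a pendant whose unique neighbor is the non-pendant $v$, contradicting the preceding sentence. Therefore every vertex of $\bar G$ is a pendant; combined with the absence of isolated vertices, $\bar G$ must be a perfect matching, which forces $n$ to be even and $G=K_n-M$, completing (ii).
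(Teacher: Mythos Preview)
Your proof is correct. The treatment of the bounds and of parts (i) and (iii) matches the paper's in spirit, though your argument for (i) is a shade slicker: you use the vc-property from Theorem~\ref{udvc} to force $\gamma(G-v)<1$, whereas the paper counts vertices of degree $n-1$ and rules out $K_r$ for $r\ge 3$ directly.

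The genuine divergence is in (ii). The paper simply invokes the known classification from \cite{bcd} that every vc-graph with $\gamma=2$ is $K_n$ minus a perfect matching, then checks the converse. You instead give a self-contained argument in the complement: translating ``$G-v$ has a unique $\gamma$-set of size $1$'' into ``$\bar G-v$ has a unique isolated vertex'', deducing that every vertex of $\bar G$ has exactly one pendant neighbour, and then observing that this forces $\bar G$ to be $1$-regular. This buys you independence from the cited result and is arguably more illuminating, since it makes transparent \emph{why} the perfect-matching structure appears. One small point of presentation: when you write ``the hypo-$\mathcal{UD}$ condition becomes: for every $v$, $\bar G-v$ has a unique isolated vertex'', you are tacitly using not just uniqueness of the $\gamma$-set of $G-v$ but also $\gamma(G-v)=1$, which comes from the vc-property via Theorem~\ref{udvc}; it would be worth making that attribution explicit.
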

\begin{proof}
By Theorem \ref{udvc}, either $G=K_2$ or $G$ is a connected vc-graph. 
Now $\gamma(K_2) =1$ and Proposition \ref{vcbound} lead to $\gamma (G) \leq \left\lfloor 2n/5\right\rfloor +1$.

(i) Let $G$ be a hypo-$\mathcal{UD}$ graph with $\gamma(G) =1$. 
     Then $G$ has $r \geq 2$ vertices of degree $n-1$. 
		 If $v \in V(G)$ and $deg(v) \leq n-2$ then $G-v$ has $r$ $\gamma$-sets, a contradiction. 
		Thus, $G=K_r$. But clearly, among all complete graphs, only $K_2$  is a hypo-$\mathcal{UD}$-graph. 

(ii) Each vc-graph $G$ with $\gamma(G) = 2$ can be obtained from a complete graph 
      of even order by removing a perfect matching \cite{bcd}. 
			Obviously, every such a graph is a hypo-$\mathcal{UD}$-graph. 
			The result now follows by Theorem \ref{udvc}. 

(iii)  Let $\gamma (G) = \left\lfloor  2n/5\right\rfloor +1$.  
Then either $G=K_2$ or  	$G \in \mathcal{A}$ (by Proposition \ref{vcbound}). 
It is easy to see that among all these graphs only $K_2, C_4$ and $C_7$ are hypo-$\mathcal{UD}$-graphs. 
\end{proof}

\begin{proposition} \label{maxud}
If $G$ is a hypo-$\mathcal{UD}$ $n$-order graph, then
\[
          n \leq (\Delta (G) +1)(\gamma(G) -1) +1.
\]
\end{proposition}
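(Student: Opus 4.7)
The plan is to treat Proposition \ref{maxud} as an almost immediate corollary of two results already in hand. The first step would be to invoke Theorem \ref{udvc}, which classifies every hypo-$\mathcal{UD}$ graph $G$ as either $K_2$ or a connected vc-graph of order at least $4$. The second step is to dispatch the vc-graph case by applying Theorem \ref{vc1}(ii), which supplies precisely the inequality $n \leq (\Delta(G)+1)(\gamma(G)-1)+1$ for every vc-graph; no further argument is required.

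If one prefers a self-contained derivation that bypasses Theorem \ref{vc1}(ii) and uses only the hypo-$\mathcal{UD}$ hypothesis, the argument I would write is as follows. Fix any vertex $x \in V(G)$ and let $D$ be the unique $\gamma$-set of $G-x$, which has cardinality $\gamma(G)-1$ because $G$ is a vc-graph (Theorem \ref{udvc}). By Lemma \ref{minus}, no vertex of $N_G[x]$ belongs to $D$, while $D$ still dominates $V(G)\setminus\{x\}$ in $G-x$. Each member of $D$ covers at most $\Delta(G)+1$ vertices, so
\[
n - 1 \;=\; |V(G)\setminus\{x\}| \;\leq\; |D|\,(\Delta(G)+1) \;=\; (\gamma(G)-1)(\Delta(G)+1),
\]
which rearranges to the desired bound.

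The only subtle point is the degenerate case $G = K_2$, where $\gamma(G)-1 = 0$ forces the right-hand side to be $1$ while $n = 2$; the statement therefore tacitly presumes $n \geq 3$ (equivalently $\gamma(G) \geq 2$, cf.\ Proposition \ref{obud}(i)), and under that reading the claim follows at once. There is essentially no main obstacle here: all the content sits inside Theorem \ref{udvc} and the classical vc-graph bound, and what remains is only the two-case bookkeeping.
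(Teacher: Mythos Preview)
Your primary approach is exactly the paper's: invoke Theorem~\ref{udvc} to reduce to the vc-graph case (or $K_2$) and then apply Theorem~\ref{vc1}(ii). Your observation about the $K_2$ edge case is apt---the paper's own proof glosses over it as well, so the proposition is indeed meant to be read for $n\geq 3$.
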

\begin{proof}
By Theorem \ref{udvc}, $G$ is a vc-graph or $G=K_2$. The result now follows by Theorem \ref{vc1}.
\end{proof}
The bound in the above corollary is attainable. This is shown in Proposition \ref{extremall}.

\begin{theorem}\label{bondud}
If $G$ is a hypo-$\mathcal{UD}$ graph then $b(G) \leq \delta(G) + 1$. 
\end{theorem}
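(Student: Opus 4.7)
The plan is to reduce to Lemma \ref{b=1} by producing, at the cost of deleting $\delta(G)$ edges, an auxiliary graph that has a unique minimum dominating set. First I would dispose of the trivial case $G=K_2$: here $b(K_2)=1$ and $\delta(K_2)+1=2$, so the inequality is immediate. From now on, assume $|V(G)|\geq 3$.

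Choose a vertex $v\in V(G)$ with $\deg_G(v)=\delta(G)$. By Theorem \ref{udvc}, $G$ is a connected vc-graph, so $\gamma(G-v)=\gamma(G)-1$; combined with the hypo-$\mathcal{UD}$ hypothesis, this supplies a \emph{unique} $\gamma$-set $D$ of $G-v$. Now let $E_v$ denote the set of $\delta(G)$ edges of $G$ incident to $v$, and form the spanning subgraph
\[
   G^{*} := G - E_v.
\]
In $G^{*}$ the vertex $v$ is isolated while the rest of the graph coincides with $G-v$, so every dominating set of $G^{*}$ must contain $v$ and restrict to a dominating set of $G-v$. It follows that $\gamma(G^{*})=1+\gamma(G-v)=\gamma(G)$ and that $\{v\}\cup D$ is the \emph{unique} $\gamma$-set of $G^{*}$.

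To finish, I would apply Lemma \ref{b=1} to $G^{*}$, which is nontrivial (it has $|V(G)|\geq 3$ vertices) and has a unique $\gamma$-set. This yields an edge $e\in E(G^{*})$ with $\gamma(G^{*}-e)>\gamma(G^{*})=\gamma(G)$. Since $G^{*}-e$ is obtained from $G$ by deleting exactly $|E_v|+1=\delta(G)+1$ edges, we conclude $b(G)\leq\delta(G)+1$.

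The only mildly delicate step is verifying that $\{v\}\cup D$ is really the \emph{unique} $\gamma$-set of $G^{*}$; everything else is bookkeeping. One small sanity check is that $G^{*}$ has at least one edge, so that $b(G^{*})$ is well-defined: if $G-v$ had no edges then $v$ would be universal in $G$, forcing $\gamma(G)=1$, which contradicts Proposition \ref{obud} under the assumption $|V(G)|\geq 3$.
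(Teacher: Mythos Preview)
Your argument is correct and follows essentially the same route as the paper's own proof: isolate a minimum-degree vertex by deleting its $\delta(G)$ incident edges, observe that the resulting graph has a unique $\gamma$-set (since $G-v$ does), and then invoke Lemma~\ref{b=1} to find one further edge whose removal raises the domination number. The only cosmetic differences are that the paper cites Corollary~\ref{minedge} ($\delta(G)\ge 2$) to guarantee that $G^{*}$ has an edge, whereas you argue via Proposition~\ref{obud}, and that you spell out the uniqueness of the $\gamma$-set of $G^{*}$ in more detail than the paper does.
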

\begin{proof}
If $G = K_2$ then the result is obvious. So, let $G$ have at least $3$ vertices. 
 By Theorem \ref{udvc}, $G$ is a vc-graph. 
Denote by $G_x$  the graph obtained from $G$ by removal  of all edges incident to $x \in V(G)$, 
where $deg(x) = \delta(G)$.   
Since $G$ is a hypo-$\mathcal{UD}$ graph, $G_x$ has a unique minimum dominating set. 
Since  $\delta(G) \geq 2$ (by Corollary \ref{minedge}), $G_x$ has edges. 
Lemma \ref{b=1}  now implies that there is an edge of $G_x$, say $e$, such that
 $\gamma(G_x-e) > \gamma(G_x)$. 
 But then $\gamma(G) = \gamma(G-x) +1 = \gamma(G_x) < \gamma(G_x-e)$. 
 Thus $b(G) \leq deg(x) + 1 = \delta(G) +1$. 
\end{proof} 
The bound stated in Theorem \ref{bondud} is tight  at least when $G \in \{C_{3k+1} \mid k \geq 1\}$. 

We now concentrate on  hypo-$\mathcal{ED}$ graphs.

\begin{proposition}\label{obed}
Let $G$ be a hypo-$\mathcal{ED}$ $n$-order graph. 
Then $G$ is connected, $n \geq 4$, and $2 \leq \gamma (G) \leq n/2$.
Furthermore, $\gamma (G) = n/2$ if and only if $G = C_4$. 
\end{proposition}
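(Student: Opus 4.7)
The plan is to address the four assertions in order, drawing on Theorem \ref{0.5} for the upper bound and on elementary properties of efficient dominating sets for the rest.

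First I would settle connectedness by a standard product-of-components argument: if $G$ split into components $G_1,\dots,G_k$ with $k\ge 2$, then for any $v\in V(G_1)$ the graph $G-v$ has $G_2,\dots,G_k$ as full components, and since $G-v$ admits an EDS each of $G_2,\dots,G_k$ must admit one. Running this for $v$ in each component shows every $G_i$ has an EDS, whose union is then an EDS of $G$, contradicting the hypothesis. Hence $G$ is connected.

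Next I would dispose of $\gamma(G)\ge 2$ by noting that if $\gamma(G)=1$ with dominating vertex $v$, then $\{v\}$ is automatically an EDS (every other vertex has exactly one neighbor in $\{v\}$ and $v$ dominates itself), contradicting that $G$ has no EDS. Since the only connected graphs with at most three vertices are $K_1,K_2,P_3,K_3$, all of which have domination number $1$, the bound $\gamma(G)\ge 2$ forces $n\ge 4$. The inequality $\gamma(G)\le n/2$ then follows immediately from Theorem \ref{0.5}(a), since $G$ being connected of order $\ge 2$ has no isolated vertices.

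For the characterization of equality, suppose $\gamma(G)=n/2$. Theorem \ref{0.5}(b) together with connectedness forces $G=C_4$ or $G=H\circ K_1$ for some connected $H$ of order $\ge 1$. The key observation—the only real content in this step—is that any corona $H\circ K_1$ already possesses an EDS, namely the set $L$ of all its leaves: each leaf dominates only itself and its support vertex, so every vertex of $G$ is dominated exactly once by $L$. Hence $H\circ K_1$ is not a hypo-$\mathcal{ED}$ graph, ruling out that family. It remains to verify that $C_4$ itself qualifies: one checks directly that no single vertex or pair of vertices of $C_4$ forms an EDS (any pair double-dominates some vertex), whereas $C_4-v=P_3$ has the EDS consisting of its center. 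Thus $G=C_4$ is the unique extremal case, completing the proof. The main (mild) obstacle is simply remembering the leaf-EDS for coronas, which is what eliminates the $H\circ K_1$ branch.
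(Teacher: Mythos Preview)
Your proof is correct and follows essentially the same route as the paper's own proof: the component argument for connectedness, the observation that a universal vertex (equivalently, $\gamma(G)=1$) yields an EDS, Theorem~\ref{0.5} for the bound $\gamma(G)\le n/2$, and the leaf-set EDS in coronas to eliminate the $H\circ K_1$ branch in the equality case. The only cosmetic difference is that you deduce $n\ge 4$ from $\gamma(G)\ge 2$ rather than by direct inspection of small graphs, but this is the same content.
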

\begin{proof}
Let $G_1$ and $G_2$ be connected components of $G$ and  $v_i \in V(G_i)$, $i=1,2$. 
Since each of $G-v_1$ and $G-v_2$ has an EDS, $G$ has an EDS - a contradiction.
Thus $G$ is connected. It is easy to check that $C_4$ is the unique hypo-$\mathcal{ED}$ 
graph of oder at most $4$. If $G$ has a vertex of degree $n-1$ then $G$ has an EDS. 
 Hence $\gamma (G) \geq 2$. Finally, by Theorem \ref{0.5} we have that 
 $\gamma (G) \leq n/2$ and if the 
equality holds then either $G$ is $C_4$ or $G$ is a corona of a connected graph.  
Since the set of all leaves of any corona is an EDS, the result immediately follows.
\end{proof}

Next we present a tight  upper bound  on the order of a hypo-$\mathcal{ED}$ graph 
in terms of the domination number and maximum degree of the graph. 
 
\begin{theorem}\label{ minusone}
Let $G$ be a graph without efficient dominating sets. Then $|V(G)| \leq \gamma(G) (\Delta(G) + 1) -1$. 
\begin{itemize}
\item[(i)] Let the equality holds. Then
 (a) for every  $\gamma$-set $D$ of $G$ there is exactly one vertex $y_D \in V(G)-D$ such that $D$ 
        is an efficient dominating set of $G-y_D$ and $y_D$ is adjacent to exactly $2$ vertices in $D$, 
				and 
	(b) each vertex belonging to some $\gamma$-set of $G$ has maximum degree.			
	In particular, if each vertex of $G$ belongs to some $\gamma$-set of $G$ then $G$ is regular. 
\item[(ii)]  If there are a $\gamma$-set $D$ of $G$ and a vertex $y$ of $G-D$  
such that  $D$ is an efficient dominating set of $G-y$, $y$ is adjacent to exactly $2$ vertices of $D$ 
and all vertices of $D$ have maximum degree then  $\gamma(G) (\Delta(G) + 1) -1 = |V(G)|$. 
\end{itemize}	
\end{theorem}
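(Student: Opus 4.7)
The plan rests on a standard double-counting of closed neighborhoods of a $\gamma$-set. Fix any $\gamma$-set $D$ of $G$, and for $u \in V(G)$ let $\mu(u) = |\{v \in D : u \in N[v]\}|$. Since $D$ dominates $G$, we have $\mu(u) \geq 1$ for every $u$, whence
\[
\sum_{v \in D} |N[v]| \;=\; \sum_{u \in V(G)} \mu(u) \;\geq\; |V(G)|.
\]
Because $G$ admits no EDS, Theorem \ref{effs1} precludes the neighborhoods $N[v]$, $v \in D$, from partitioning $V(G)$, so some $u$ has $\mu(u) \geq 2$; this boosts the displayed sum by at least $1$. Combined with the crude bound $\sum_{v \in D}|N[v]| \leq \gamma(G)(\Delta(G)+1)$, the upper bound $|V(G)| \leq \gamma(G)(\Delta(G)+1) - 1$ follows.

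For part (i), I would assume the extremal equality and extract the structural consequences by tightening each inequality above. The bound on $\sum|N[v]|$ forces $\deg(v) = \Delta(G)$ for every $v \in D$, giving (b) (and the regularity corollary, since then every vertex in some $\gamma$-set has maximum degree). The covering inequality then becomes an equality modulo a single overcount, i.e.\ there is exactly one $y_D$ with $\mu(y_D) = 2$ and every other vertex of $G$ is covered exactly once. Next I would argue $y_D \notin D$: if $y_D \in D$, then $\mu(y_D) = 2$ exhibits an adjacent $v \in D \setminus \{y_D\}$, and such a $v$ would be covered both by itself and by its neighbor $y_D$, contradicting $\mu(v) = 1$. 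Hence $y_D \in V(G)-D$ has exactly two neighbors in $D$. Finally, the fact that every vertex other than $y_D$ lies in exactly one $N[v]$ with $v \in D$ means $\{N[v]\setminus\{y_D\} : v \in D\}$ partitions $V(G)-\{y_D\}$, which is the statement that $D$ is an EDS of $G-y_D$; uniqueness of $y_D$ for the chosen $D$ is inherited from uniqueness of the doubly-covered vertex.

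Part (ii) should be a direct count. Under the stated hypotheses, $D$ is an EDS of $G-y$, so the sets $N_{G-y}[v]$, $v \in D$, partition $V(G)-\{y\}$. Two of these (the ones centered at the two neighbors of $y$ in $D$) have size $\Delta(G)$, and the remaining $\gamma(G)-2$ have size $\Delta(G)+1$. Summing yields $|V(G)| - 1 = (\gamma(G)-2)(\Delta(G)+1) + 2\Delta(G) = \gamma(G)(\Delta(G)+1) - 2$, as required.

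The only real subtlety I anticipate is the equality bookkeeping in (i): confirming that the unique doubly-covered vertex $y_D$ must lie outside $D$ and must have exactly (rather than at least) two neighbors in $D$. Both pin down because the total overcount is forced to be exactly one, but the short argument has to be written carefully, as sketched above, to avoid circularity between the location of $y_D$ and the values of $\mu$ on $D$.
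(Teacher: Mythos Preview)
Your argument is correct and is essentially the same double-counting of closed neighborhoods that the paper uses; the paper phrases it as a two-case split (an edge inside $D$ versus a common neighbor outside $D$) rather than via your multiplicity function $\mu$, but the content is identical. Your explicit check that $y_D\notin D$ is a nice addition---the paper handles this implicitly by observing that an edge within $D$ already forces the stronger bound $|V(G)|\le\gamma(G)(\Delta(G)+1)-2$, which rules out that case at equality.
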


\begin{proof}
Let $D=\{x_1,x_2,..,x_k\}$ be an arbitrary  $\gamma$-set of $G$. 
If $x_ix_j \in E(G)$ then 
\[
|V(G)| \leq \Sigma_{r=1}^k|N[x_r]| - |\{x_i,x_j\}| = \Sigma_{r=1}^k (deg(x_r)+1) - 2 \leq \gamma(G)(\Delta(G)+1) - 2.
\]
If $x_ix_j \not \in E(G)$ and $y \in V(G)-D$ is a common neighbor of both $x_i$ and $x_j$ then 
\[
|V(G)| \leq \Sigma_{r=1}^k|N[x_r]| - |\{y\}| = \Sigma_{r=1}^k (deg(x_r)+1) - 1 \leq \gamma(G)(\Delta(G)+1) - 1.
\]

(i) Suppose $|V(G)| = \gamma(G)(\Delta(G)+1) - 1$.  
Then $|V(G)| = \Sigma_{r=1}^k|N[x_r]| - |\{y\}|$ and $\Sigma_{r=1}^k (deg(x_r)+1) - 1 =\gamma(G)(\Delta(G)+1) - 1$.
Since $D$ is a $\gamma$-set, (a) by the first equality we have that $D$ is independent, each vertex   in $V(G)-(D\cup \{y\})$ 
is adjacent to exactly one vertex of $D$,  and $y$  is adjacent to exactly $2$ vertices in $D$, and 
(b)   by the second equality, it follows that $deg(x_r) = \Delta(G)$ for all $r = 1,2,..,k$. 
The rest is obvious.

(ii) Assume now that there is a $\gamma$-set $D=\{x_1,x_2,..,x_k\}$ of $G$ such  that 
$deg(x_1) = .. =  deg(x_k)  = \Delta(G)$, 
 $D$ is   an efficient dominating set of $G-y$ for some vertex $y \in V(G)-D$ and 
$y$ has exactly $2$ elements of $D$ as neighbors. 
  Then $|V(G)| = \Sigma_{r=1}^k|N_{G-y}[x_r]|+|\{y\}| = \Sigma_{r=1}^k|N_G[x_r]| - |\{y\}| = \gamma(G)(\Delta(G)+1) - 1$.
 \end{proof}

\begin{corollary}\label{min1}
Theorem \ref{ minusone} is valid when $G$ is a hypo-$\mathcal{ED}$ graph.
\end{corollary}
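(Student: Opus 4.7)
The plan is to observe that this corollary is essentially an immediate reformulation of Theorem~\ref{ minusone} under a slightly stronger hypothesis, so the ``proof'' collapses to verifying that the hypothesis of that theorem is met. By definition, a hypo-$\mathcal{ED}$ graph $G$ has no efficient dominating set, and this is precisely the single hypothesis required by Theorem~\ref{ minusone}. Consequently, the bound $|V(G)| \leq \gamma(G)(\Delta(G)+1) - 1$ holds, together with the characterizations (i) and (ii) of that theorem, verbatim.

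First I would unpack the definition of a hypo-$\mathcal{ED}$ graph: $G$ has no EDS, but $G-v$ has an EDS for every $v \in V(G)$. The first half of this definition matches the hypothesis of Theorem~\ref{ minusone} word for word, and no further property of $G$ is required to invoke the theorem. So the proof amounts to a one-line appeal: apply Theorem~\ref{ minusone} to $G$ and obtain the inequality together with conclusions (i) and (ii).

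There is no genuine obstacle here; the content of the corollary is contextual rather than technical. Its role in the paper is to isolate hypo-$\mathcal{ED}$ graphs as a natural class in which the bound of Theorem~\ref{ minusone} should be studied, and to set up the subsequent construction of extremal circulant examples. It is worth noting that the hypo hypothesis is strictly stronger than what part (i)(a) of Theorem~\ref{ minusone} yields at equality (existence of a single $y_D$ per $\gamma$-set $D$), so the additional structure ``every $G-v$ has an EDS'' could be exploited later to refine the extremal analysis, but it is not needed for the statement of the corollary itself.
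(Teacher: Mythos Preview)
Your proposal is correct and matches the paper's approach: the corollary is stated without proof there, precisely because a hypo-$\mathcal{ED}$ graph by definition has no efficient dominating set, which is the sole hypothesis of Theorem~\ref{ minusone}. Your discussion is more than sufficient.
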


We give the following examples to illustrate the sharpness of the bound in Corollary \ref{min1}. 

\begin{example}\label{cycle}
All hypo-$\mathcal{ED}$ cycles  are $C_{3k+1}$ and $C_{3k+2}$, $k \geq 1$. 
Moreover,   $|V(C_{3k+2})|  =\gamma(C_{3k+2})(\Delta(C_{3k+2}) +1) - 1$, $k \geq 1$.
\end{example}

\begin{example}\label{extr2}
If $G \in  \{C(8k+5, \{1,..,k\} \cup \{3k+2,..,4k+2\}) \mid k \geq 1\}$  
then $G$ is a  hypo-$\mathcal{ED}$ graph with $|V(G)| =\gamma(G) (\Delta(G) + 1) -1$. 
\end{example}
\begin{proof}
First note that $G$ is $(4k+2)$-regular graph of order $8k+5$. 
Hence $\gamma(G) \geq 2$. Since for any $r \in V(G)$ the vertex set 
 $\{r, r+2k+1\}$  is  dominating for $G$ and 
$N[r] \cap N[r+2k+1] = \{r+5k+3\}$, it follows  that $\gamma(G)  = \gamma(G-\{r+5k+3\}) = 2$ 
and $\{r, r+2k+1\}$ is an efficient dominating set for $G-\{r+5k+3\}$  (where addition is taken mod $8k+5$). 
Thus $G$ is a  hypo-$\mathcal{ED}$ graph and clearly $|V(G)| =\gamma(G) (\Delta(G) + 1) -1$ holds.
\end{proof}

\begin{example}\label{extr1}
Let $G \in  \{C(t(2k+1)-1, \{1,..,k\}) \mid k \geq 1, t \geq 2\}$. 
Then $G$ is a  hypo-$\mathcal{ED}$ graph with $|V(G)| =\gamma(G) (\Delta(G) + 1) -1$. 
\end{example}
\begin{proof}
	A graph  $G$ is $2k$-regular  of order $n=t(2k+1)-1$ and by Theorem \ref{circu}, $\gamma(G) = t$. 
Assume first $t$ is odd. Then the set $D_r = \{r\pm l(2k+1)\  (\mbox{mod\ } n) \mid l \in \{0,1,..,(t-1)/2\}\}$
 is a $\gamma$-set of $G$ for any vertex $r$ of $G$. 
Furthermore, the distance between  any pair of distinct vertices of $D_r$ is at least $3$, 
except for the pair $a_1 = r+(t-1)(2k+1)/2$,  $a_2 = r-(t-1)(2k+1)/2$. 
Since $N[a_1]$ and $N[a_2]$ have exactly the vertex $a_1+k$ in common, 
 $D_r -  \{a_1 +k\}$ is an EDS of $G-\{a_1 +k\}$ for any vertex $r$ of $G$. 

 Assume now $t$ is even. Then 
the set $U_r = \{r\pm s(2k+1)\  (\mbox{mod\ } n) \mid s\in \{0,1,..,(t-2)/2\}\} \cup \{t(2k+1)/2 - 1\}$ 
 is a $\gamma$-set of $G$ for any vertex $r$ of $G$. 
Note  that the distance between  any pair of distinct vertices of $U_r$ is at least $3$, 
except for the pair $b_1 = r+(t-2)(2k+1)/2$,  $b_2 = r+t(2k+1)/2 - 1$. 
Since $N[b_1] \cap N[b_2] = \{b_1+k\}$, 
 $U_r -  \{b_1 +k\}$ is an EDS of $G-\{b_1 +k\}$ for any vertex $r$ of $G$. 
\end{proof}


Now we turn our attention to the hypo-$\mathcal{ED}$ graphs having $\gamma$-critical vertices.

\begin{proposition}\label{ed1}
 A  connected vc-graph $G$ is a hypo-${\mathcal ED}$ graph if and only if 
$G-v$ has an efficient dominating set for all $v \in V(G)$.
\end{proposition}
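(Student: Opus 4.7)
The forward direction is immediate from the definition of a hypo-$\mathcal{ED}$ graph. For the converse, my plan is to argue by contradiction: assume $G$ is a connected vc-graph satisfying the hypothesis that $G-v$ has an EDS for every $v$, and that $G$ also admits an EDS $D$. By Theorem \ref{effs1}, $|D|=\gamma(G)$ and $D$ is independent; by Remark \ref{2edge}, $|V(G)|\geq 3$, so $V(G)\setminus D\neq\emptyset$. The crucial step is to pick a vertex $v\in V(G)\setminus D$ and derive a contradiction via a bijection-style map between the symmetric differences of $D$ and an EDS of $G-v$.

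Let $E$ be an EDS of $G-v$ given by hypothesis. Then $|E|=\gamma(G-v)=|D|-1$ by the vc-property, and Lemma \ref{minus} gives $E\cap N_G[v]=\emptyset$; inclusion-exclusion yields $|D-E|=|E-D|+1$. The key construction is the map $\pi\colon E-D\to D-E$ sending each $y$ to its unique $D$-neighbor (well-defined since $y\notin D$, and with image in $D-E$ by independence of $E$). Using $v\notin D$, I will verify that $\pi$ is injective: if $\pi(y_1)=\pi(y_2)=d$, then $d\neq v$ (since $v\notin D$), so $d\in V(G-v)\setminus E$ would have two distinct $E$-neighbors, contradicting that $E$ is an EDS of $G-v$. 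Consequently the image of $\pi$ omits exactly one element $d^*\in D-E$.

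To complete the contradiction, I will exhibit a preimage of $d^*$. Since $d^*\neq v$, we have $d^*\in V(G-v)\setminus E$, so by the EDS property of $E$ it has a unique $E$-neighbor $\psi(d^*)\in E$; independence of $D$ forces $\psi(d^*)\in E-D$; and the EDS property of $D$, together with $d^*\sim\psi(d^*)$ and $d^*\in D$, forces the unique $D$-neighbor of $\psi(d^*)$ to be $d^*$, so $\pi(\psi(d^*))=d^*$. This places $d^*$ in the image of $\pi$, contradicting the choice of $d^*$. Hence $G$ has no EDS, and together with the hypothesis this shows $G$ is a hypo-$\mathcal{ED}$ graph.

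The main obstacle is recognizing that $v$ must be chosen outside $D$: the parallel argument with $v\in D$ breaks down, since the missing element $d^*$ could be $v$ itself, in which case $d^*\notin V(G-v)$ precludes the final preimage construction through $\psi$.
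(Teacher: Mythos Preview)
Your argument is correct, but it is considerably more elaborate than the paper's. The paper dispatches the converse in essentially one line: if $D$ is an EDS of $G$ and $v\in V(G)\setminus D$, then $D$ is still an EDS of $G-v$ (removing a vertex outside $D$ leaves every remaining vertex dominated exactly once by $D$), whence by Theorem~\ref{effs1} we get $\gamma(G-v)=|D|=\gamma(G)$, contradicting the vc-property. In particular the paper never invokes a second EDS $E$ of $G-v$, so the entire bijection apparatus between $E\setminus D$ and $D\setminus E$ is unnecessary.

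Your injection $\pi$ and the preimage construction via $\psi$ are valid, but they amount to an indirect verification of what the single observation ``$D$ itself is an EDS of $G-v$'' delivers for free. One thing the paper's route makes transparent, and yours obscures, is that the hypothesis on the graphs $G-v$ plays no role whatsoever in ruling out an EDS of $G$: any connected nontrivial vc-graph fails to have an EDS, period. The hypothesis is used only to supply the other half of the definition of hypo-$\mathcal{ED}$.
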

\begin{proof}
$\Rightarrow$ Obvious.

$\Leftarrow$ If $D$ is an EDS of $G$ and $v \in V(G)-D$ then 
              $D$ is an EDS $G-v$. Now by Theorem \ref{effs1}, 
              $\gamma(G) = |D| = \gamma(G-v)$, a contradiction.  
\end{proof}

\begin{theorem}\label{vced=ud}
Let $G$ be a hypo-$\mathcal{ED}$ vc-graph. 
Then for every vertex $v \in V(G)$, $G-v$ has exactly one efficient dominating set. 
If in addition $G$ is regular then $G$ is a hypo-$\mathcal{UD}$ graph. 
\end{theorem}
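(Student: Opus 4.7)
My plan is to handle the two assertions separately: first establish uniqueness of the EDS of $G-v$ (the harder statement), then deduce hypo-$\mathcal{UD}$ in the regular case from it.

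For the uniqueness claim I would argue by contradiction. Suppose $G-v$ admits two distinct EDSs $D_1$ and $D_2$. Since $G$ is a vc-graph, $v$ is $\gamma$-critical, so by Lemma \ref{minus} both $D_i$ are $\gamma$-sets of $G-v$ of size $\gamma(G)-1$ disjoint from $N_G[v]$. Consequently $\{N_G[x]:x\in D_i\}$ partitions $V(G)\setminus\{v\}$ for $i=1,2$. Pick $u\in D_1\setminus D_2$; then in the $D_2$-partition, $u$ lies in a unique cluster $N_G[y]$ with $y\in D_2\cap N_G(u)$. The plan is then to invoke hypo-$\mathcal{ED}$ at a well-chosen vertex (either $u$ or $y$) to obtain an EDS $F$ of $G-u$ (or $G-y$), and to combine the rigid cluster decompositions produced by $D_1$, $D_2$, and $F$ to build a subcollection of $\{N_G[w]:w\in V(G)\}$ that partitions $V(G)$; by Theorem \ref{effs1} this is an EDS of $G$, contradicting the hypo-$\mathcal{ED}$ hypothesis. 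The main obstacle is exactly this gluing step: the three partitions agree locally only in certain places, and one must make the swap (removing $u$ and inserting a suitable vertex near $v$) so that the resulting closed neighborhoods in $G$ are genuinely pairwise disjoint near $v$ and its neighbors.

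For the regular case, a short counting argument unlocks everything once the first claim is known. If $G$ is $d$-regular and $D$ is any $\gamma$-set of $G-v$, then $|D|=\gamma(G)-1$ and $D\cap N_G[v]=\emptyset$, so every $x\in D$ still has degree $d$ in $G-v$. Therefore
\[
\sum_{x\in D}|N_{G-v}[x]| \;=\; (d+1)(\gamma(G)-1).
\]
Since $G-v$ possesses \emph{some} EDS of the same size $\gamma(G)-1$, the right-hand side must equal $|V(G)|-1$. As $D$ dominates $V(G-v)$, this forces the sets $N_{G-v}[x]$ ($x\in D$) to be pairwise disjoint and to partition $V(G-v)$, so $D$ is itself an EDS. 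Combined with the first claim, $G-v$ therefore has a unique $\gamma$-set for every $v\in V(G)$.

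It remains to check that $G$ has at least two $\gamma$-sets. Because $G$ is a vc-graph, every vertex $w$ lies in some $\gamma$-set of $G$: simply take a $\gamma$-set of $G-w$ and adjoin $w$. If $G$ had a unique $\gamma$-set, this set would contain every vertex of $G$, forcing $\gamma(G)=|V(G)|$. But by Proposition \ref{obed} the hypo-$\mathcal{ED}$ graph $G$ is connected of order at least $4$, so by Theorem \ref{0.5}(a), $\gamma(G)\leq |V(G)|/2$, a contradiction. Hence $G$ has at least two $\gamma$-sets, and together with the previous step $G$ is hypo-$\mathcal{UD}$.
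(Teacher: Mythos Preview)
Your treatment of the regular case is correct and matches the paper's: the counting $\sum_{x\in D}|N_{G-v}[x]|=(d+1)(\gamma(G)-1)=|V(G)|-1$ forces every $\gamma$-set of $G-v$ to be an EDS, and then uniqueness of the $\gamma$-set follows from the first claim. You even verify that $G$ has at least two $\gamma$-sets, which the paper leaves implicit.

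The genuine gap is in the first claim. Your plan---take an EDS $F$ of $G-u$ (or $G-y$) and glue pieces of $D_1$, $D_2$, $F$ into a family of closed neighbourhoods partitioning $V(G)$---is not carried out, and you yourself flag the gluing near $v$ as ``the main obstacle''. There is no evident mechanism to overcome it: the vertex of $F$ whose block contains $v$ need not be at distance $\ge 3$ from every member of $D_1$ (or of $D_2$), so the patched family need not consist of pairwise disjoint closed neighbourhoods. Without this step the intended contradiction never fires, and indeed nothing concrete in your outline ever uses the existence of the second EDS $D_2$.

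The paper bypasses gluing entirely with a short pigeonhole. Fix an EDS $D_x$ of $G-x$ and any $y\in D_x$; let $D_y$ be an \emph{arbitrary} EDS of $G-y$. By Lemma~\ref{minus}, $D_y\cap N_G[y]=\emptyset$ and $|D_y|=|D_x|=\gamma(G)-1$. The blocks $N_G[u]$, $u\in D_x$, partition $V(G)\setminus\{x\}$, so the $\gamma(G)-1$ vertices of $D_y$ must lie among the $\gamma(G)-2$ blocks indexed by $D_x\setminus\{y\}$ together with the single leftover point $x$. Two members of $D_y$ in one block $N_G[u]$ (with $u\ne y$) would be at distance $\le 2$ in $G-y$, contradicting that $D_y$ is an EDS there. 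Hence $x\in D_y$. Since $D_y$ was arbitrary, $x$ lies in every EDS of $G-y$; by symmetry $y$ lies in every EDS of $G-x$. As $y\in D_x$ was arbitrary, every EDS of $G-x$ contains $D_x$ and therefore equals $D_x$.
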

\begin{proof}
Let $x \in V(G)$, $D_x$ an EDS of $G-x$, $y \in D_x$ and let $D_y$ be an EDS of $G-y$. 
Note that  $D_y$ and $N[y]$ are disjoint and $|D_y| = \gamma(G-y) = \gamma (G) -1 = \gamma(G-x) = |D_x|$.  
Hence there exists exactly one vertex of $D_y$, say $z$, which is not dominated by $D_x$.                                                      
But  $D_x$ is a $\gamma$-set of $G-x$. Thus  $z \equiv x$.
As $D_y$ was chosen arbitrarily, $x$ belongs to all EDS of $G-y$. 
By symmetry $y$ belongs to all EDS of $G-x$. 
This allow us to deduce that $D_x$ is the unique EDS of $G-x$.  

Finally, let $G$ be  $k$-regular.  
Then all vertices of $D_x$ have degree $k$ in $G-x$ and $|V(G-x)| = |D_x|(k+1) = \gamma(G-x)(\Delta(G-x)+1)$.
This implies  that all $\gamma$-sets of $G-x$ are efficient dominating.
But we already know that $G-x$ has exactly one EDS. 
Thus $G$ is a hypo-$\mathcal{UD}$ graph.
\end{proof}

\begin{theorem}\label{delta}
Let  a  hypo-${\mathcal ED}$ graph $G$ have a $\gamma$-critical vertex.  
 Then
\[
(\delta(G) + 1)(\gamma(G)-1) +1 \leq |V(G)|  \leq (\Delta(G) +1)(\gamma(G)-1) +1. 
\] 
\end{theorem}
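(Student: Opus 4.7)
The plan is to exploit the interplay between a $\gamma$-critical vertex $v$ and the EDS that $G-v$ must admit. Let $v$ be a $\gamma$-critical vertex of $G$, so $\gamma(G-v) = \gamma(G) - 1$. Since $G$ is hypo-$\mathcal{ED}$, the graph $G-v$ has an efficient dominating set $D$, and by Theorem \ref{effs1} we have $|D| = \gamma(G-v) = \gamma(G) - 1$.

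The first key observation I would establish is that no vertex of $D$ is adjacent to $v$ in $G$. Indeed, if some $x \in D$ were adjacent to $v$, then $D$ would already dominate $v$ in $G$, making $D$ a dominating set of $G$ of size $\gamma(G) - 1$, contradicting the definition of $\gamma(G)$. Consequently, $\deg_{G-v}(x) = \deg_G(x)$ for every $x \in D$.

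Next I would use the efficient-domination partition. Since $D$ is an EDS of $G-v$, the closed neighborhoods $\{N_{G-v}[x] : x \in D\}$ form a partition of $V(G-v)$, hence
\begin{equation*}
|V(G)| - 1 \;=\; \sum_{x \in D} \bigl(\deg_{G-v}(x) + 1\bigr) \;=\; \sum_{x \in D} \bigl(\deg_G(x) + 1\bigr),
\end{equation*}
where the second equality uses the adjacency observation above. Bounding each summand between $\delta(G) + 1$ and $\Delta(G) + 1$ and using $|D| = \gamma(G) - 1$ immediately yields
\begin{equation*}
(\delta(G)+1)(\gamma(G)-1) \;\leq\; |V(G)| - 1 \;\leq\; (\Delta(G)+1)(\gamma(G)-1),
\end{equation*}
which, after adding $1$ throughout, is exactly the inequality claimed.

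I do not anticipate a real obstacle in this argument; the only delicate point is verifying that $v$ is not dominated by $D$ in $G$, and that follows in one line from the minimality of $\gamma(G)$. The rest is a clean double counting via the EDS partition, entirely parallel in spirit to the proof of Theorem \ref{ minusone}, but now applied inside $G-v$ rather than in $G$ itself.
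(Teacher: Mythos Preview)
Your proof is correct and takes essentially the same approach as the paper: pick a $\gamma$-critical vertex, take an EDS $D$ of $G-v$ of size $\gamma(G)-1$, note that $v$ has no neighbor in $D$ (the paper gets this silently from Lemma~\ref{minus}, you argue it directly), and then count via the partition $\{v\}, N_G[u_1],\dots,N_G[u_k]$ of $V(G)$. The only cosmetic difference is that the paper phrases the partition in $G$ while you phrase it in $G-v$ and add~$1$; the arithmetic is identical.
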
 
\begin{proof}
If $x$ is a $\gamma$-critical vertex of $G$ and $D = \{u_1,..,u_k\}$  is an EDS of $G-x$ 
then the sets $\{x\}, N[u_1],..,N[u_k]$ form a partition of $V(G)$.  
 Since $\delta(G)+1 \leq |N[u_i]| = deg (u_i) + 1 \leq \Delta(G) + 1$, $i = 1,..,k$, we have  
\[
1 + (\delta(G) + 1)(\gamma(G)-1) \leq |\{x\}| + \Sigma_{i=1}^k|N[u_i]| = |V(G)| \leq 1 + (\Delta(G) +1)(\gamma(G)-1).
\]
\end{proof}
\begin{corollary}\label{reg}
If $G$  is a  regular hypo-${\mathcal ED}$ graph having a $\gamma$-critical vertex 
 then $|V(G)|  = (\Delta(G) +1)(\gamma(G)-1) +1 = (\delta(G) +1)(\gamma(G)-1) +1$. 
\end{corollary}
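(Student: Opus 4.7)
The plan is to derive this essentially for free from Theorem \ref{delta}, which gives the two-sided inequality
\[
(\delta(G)+1)(\gamma(G)-1) + 1 \;\leq\; |V(G)| \;\leq\; (\Delta(G)+1)(\gamma(G)-1) + 1
\]
for every hypo-$\mathcal{ED}$ graph possessing a $\gamma$-critical vertex. The regularity assumption forces $\delta(G) = \Delta(G)$, so the lower and upper bounds collapse onto a single value, and $|V(G)|$ must equal this common value.

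More concretely, I would first invoke Theorem \ref{delta} to obtain the inequality above, noting that its hypothesis is exactly satisfied: $G$ is a hypo-$\mathcal{ED}$ graph and has a $\gamma$-critical vertex by assumption. I would then observe that for any $k$-regular graph we have $\delta(G) = \Delta(G) = k$, hence $(\delta(G)+1)(\gamma(G)-1)+1 = (\Delta(G)+1)(\gamma(G)-1)+1$. The two inequalities thus become equalities, yielding the claim.

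There is no real obstacle here; the statement is a direct specialization of Theorem \ref{delta}. The only thing worth making explicit in the write-up is the structural picture behind the equality: if $x$ is a $\gamma$-critical vertex and $D = \{u_1, \ldots, u_{\gamma(G)-1}\}$ is an EDS of $G - x$, then $\{x\}, N[u_1], \ldots, N[u_{\gamma(G)-1}]$ partition $V(G)$, and regularity forces each $|N[u_i]| = \Delta(G) + 1$, giving the exact count. This observation is essentially a recap of the proof of Theorem \ref{delta}, so I would keep it brief or omit it entirely.
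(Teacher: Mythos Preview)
Your proposal is correct and matches the paper's approach exactly: the corollary is stated in the paper without proof, as it follows immediately from Theorem~\ref{delta} once one observes that regularity gives $\delta(G)=\Delta(G)$. Your additional structural remark about the partition $\{x\}, N[u_1],\ldots,N[u_{\gamma(G)-1}]$ is accurate but, as you suspected, unnecessary here.
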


\begin{theorem}\label{regiff}
Let $G$ be a connected  graph with $(\Delta (G) + 1) (\gamma(G) -1) + 1 \geq 4$ vertices. 
If $G$ is a vc-graph then $G$ is both a hypo-${\mathcal ED}$ graph and a hypo-${\mathcal UD}$ regular graph. 
\end{theorem}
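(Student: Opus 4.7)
The plan is to combine the extremal version of Theorem \ref{vc1}(ii) with a tight counting argument on $\gamma$-sets of $G-v$, and then close the hypo-$\mathcal{UD}$ half by invoking Theorem \ref{vced=ud}. The starting point is that, since $G$ is a vc-graph attaining equality in Theorem \ref{vc1}(ii), $G$ is $\Delta$-regular where $\Delta := \Delta(G)$; the size condition $(\Delta+1)(\gamma(G)-1)+1 \geq 4$ together with connectivity forces $\Delta \geq 1$ and $\gamma(G) \geq 2$.

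The heart of the proof is showing that $G-v$ has an EDS for every $v \in V(G)$. First, vc-criticality gives $\gamma(G-v) = \gamma(G)-1$. For any $\gamma$-set $D$ of $G-v$, the regularity of $G$ yields the chain
\[
|V(G-v)| = (\Delta+1)(\gamma(G)-1) = |D|(\Delta+1) \geq \sum_{u \in D}\bigl(1+\deg_{G-v}(u)\bigr) \geq |N_{G-v}[D]| = |V(G-v)|,
\]
in which the rightmost equality uses that $D$ dominates $G-v$. Forcing equality throughout shows that no vertex of $D$ is adjacent to $v$ in $G$ and that the closed neighborhoods $\{N_{G-v}[u] : u \in D\}$ are pairwise disjoint; hence $D$ is an EDS of $G-v$.

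To complete the hypo-$\mathcal{ED}$ part, I would rule out an EDS for $G$ itself: any such set $D'$ would give $|V(G)| = |D'|(\Delta+1) = \gamma(G)(\Delta+1)$, which exceeds $(\Delta+1)(\gamma(G)-1)+1$ by $\Delta \geq 1$, a contradiction. Thus $G$ is a hypo-$\mathcal{ED}$ graph. Since $G$ is also a regular vc-graph, Theorem \ref{vced=ud} immediately yields that $G$ is a hypo-$\mathcal{UD}$ graph.

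The main obstacle is recognising that the extremal order hypothesis forces the above inequality chain to collapse to equalities, so that \emph{every} $\gamma$-set of $G-v$ is automatically efficient; once this counting observation is in hand, the remaining steps amount to short citations of Theorems \ref{vc1} and \ref{vced=ud}.
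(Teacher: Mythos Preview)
Your proposal is correct and follows essentially the same route as the paper: regularity from the extremal case of Theorem~\ref{vc1}(ii), a tight counting chain on $|V(G-v)|$ showing every $\gamma$-set of $G-v$ is an EDS, and then Theorem~\ref{vced=ud} for the hypo-$\mathcal{UD}$ conclusion. The only cosmetic difference is that you rule out an EDS of $G$ by a direct cardinality contradiction, whereas the paper tacitly appeals to Proposition~\ref{ed1}.
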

\begin{proof}
Let $G$ be a vc-graph.  By  Theorem \ref{vc1}, $G$ is regular. 
Let $x \in V(G)$ and $D = \{x_1,..,x_k\}$ a $\gamma$-set of $G-x$.  
Then 
\[
|V(G-x)| \leq \Sigma_{r=1}^k|N[x_r]| = \Sigma_{r=1}^k (\Delta(G)+1) = (\gamma(G)-1)(\Delta(G)+1) = |V(G-x)|. 
\]
Hence $N[x_1], N[x_2],..,N[x_k]$ form a partition of $V(G)$ 
and we can conclude that $D$ is an EDS of $G-x$.  
Thus $G$ is a hypo-${\mathcal ED}$ graph. 
Now by  Theorem \ref{vced=ud} ,  $G$ is a hypo-${\mathcal UD}$ graph. 
\end{proof}

\begin{proposition}\label{extremall}
Let $G  = C(n; \{1,2,..,k\})$, where $n \geq 4$ and  $1\leq k < \left\lfloor n/2 \right\rfloor$.  
Then  $G$ is a hypo-${\mathcal UD}$ graph if and only if $2k+1$ divides $n-1$. 
If $2k+1$ divides $n-1$ then  $n = |V(G)| = (\Delta (G) + 1) (\gamma(G) -1) + 1$, and 
$G$ is  a hypo-${\mathcal ED}$ graph.
\end{proposition}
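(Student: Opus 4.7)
The plan is to notice that the statement is essentially a bookkeeping exercise combining Theorem \ref{circu}(b) with Theorem \ref{regiff}, together with the vc-graph characterization of hypo-$\mathcal{UD}$ graphs in Theorem \ref{udvc}. First I would record the basic parameters of $G = C(n;\{1,\dots,k\})$: since $k < \lfloor n/2 \rfloor$, the graph is $2k$-regular, so $\Delta(G) = 2k$, and by Theorem \ref{circu}(a), $\gamma(G) = \lceil n/(2k+1) \rceil$. When $2k+1$ divides $n-1$, writing $n-1 = m(2k+1)$, this gives $\gamma(G) = m+1$ and
\[
(\Delta(G)+1)(\gamma(G)-1)+1 = (2k+1)m + 1 = n,
\]
so $G$ attains the extremal bound of Theorem \ref{vc1}(ii).

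For the implication ($\Leftarrow$), assume $2k+1$ divides $n-1$. Theorem \ref{circu}(b) gives that $G$ is a connected vc-graph, and the arithmetic identity above shows that $|V(G)| = (\Delta(G)+1)(\gamma(G)-1)+1 \geq 4$. Hence the hypotheses of Theorem \ref{regiff} are satisfied, and $G$ is simultaneously a hypo-$\mathcal{ED}$ graph and a hypo-$\mathcal{UD}$ graph, with the claimed order identity already verified. Conversely, if $G$ is a hypo-$\mathcal{UD}$ graph, then since $n \geq 4$ we have $G \neq K_2$, so Theorem \ref{udvc} tells us that $G$ is a connected vc-graph, and Theorem \ref{circu}(b) then forces $2k+1$ to divide $n-1$.

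I do not expect any real obstacle: the only substantive ingredients are the cited theorems and the elementary observation that $\lceil n/(2k+1) \rceil = (n-1)/(2k+1)+1$ whenever $(2k+1) \mid (n-1)$. The point of Proposition \ref{extremall} is not depth but the display of a concrete infinite family of circulants attaining equality in the bound of Proposition \ref{maxud}, and simultaneously (via regularity and Theorem \ref{vced=ud}) witnessing tightness in Corollary \ref{min1}.
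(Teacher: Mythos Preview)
Your proposal is correct and follows essentially the same route as the paper: invoke Theorem~\ref{circu} to identify when $G$ is a vc-graph and to compute $\gamma(G)$, apply Theorem~\ref{regiff} for the forward direction, and use Theorem~\ref{udvc} for the converse. Your explicit arithmetic verification of the identity $n = (\Delta(G)+1)(\gamma(G)-1)+1$ and your remark that $n\geq 4$ rules out the $K_2$ exception in Theorem~\ref{udvc} are both slightly more detailed than the paper's version, but the argument is the same.
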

\begin{proof}
Note that $G$ is a $2k$-regular. First let $2k+1$ divides $n-1$. 
By Theorem \ref{circu}  we have that $n = |V(G)| = (\Delta (G) + 1) (\gamma(G) -1) + 1$ and $G$ is a vc-graph.  
Now $G$ is both a hypo-${\mathcal ED}$ graph and a hypo-${\mathcal UD}$ graph,
 because  Theorem \ref{regiff}. 

If $G$ is a hypo-${\mathcal UD}$ graph then by Theorem \ref{udvc}, $G$ is a vc-graph.  
But then Theorem \ref{circu} implies that $2k+1$ divides $n-1$. 
\end{proof}


\section{Open problems and  questions}

We conclude the paper by listing some interesting problems and directions for further research. 
Let $\mathcal{P} \in \{\mathcal{ED}, \mathcal{UD}\}$. 
\begin{itemize}
                    \item[$\bullet$] {\em Find all ordered pairs $(n,k)$ of integers such that there is 
										a hypo-$\mathcal{P}$ graph $G$ of order $n$ and the domination number $k$. }
\end{itemize}

If $\mathcal{P} = \mathcal{UD}$ then by Proposition \ref{obud},  $1 \leq \gamma (G) \leq \left\lfloor 2n/5\right\rfloor +1$. 
Furthermore, 
(a) if $k=1$ then $n=2$, 
(b) if $k=2$ then $n \geq 4$ is even, and 
(c) if  $k= \left\lfloor  2n/5\right\rfloor +1$ then $(n,k) \in \{(2,1), (4,2), (7,3)\}$. 
Note that in \cite{ms}   a  characterization  is  given for  the  connected $n$-order graphs  $G$ 
for which   $\gamma (G) = 2n/5$. 

If $\mathcal{P} =  \mathcal{ED}$ then $2 \leq \gamma(G) \leq n/2$ (Proposition \ref{obed}) and 
moreover if $\gamma(G) = n/2$ then $n=4$.  
A characterization of $n$-vertex connected graphs $G$ 
whose domination number satisfies $\gamma(G) = (n-1)/2$ is obtained in \cite{bchhs}. 
\begin{itemize} 
                   \item[$\bullet$] {\em If $G$ is a hypo-$\mathcal{P}$ graph of order $n$ and the domination number $k$, 
                   what is the maximum/minimum number of edges in  $G$?}
\end{itemize}

\begin{itemize}
                    \item[$\bullet$] {\em Find all hypo-$\mathcal{ED}$ trees and 
                                          all hypo-$\mathcal{ED}$ unicyclic graphs.  }
\end{itemize}

\begin{itemize}
                    \item[$\bullet$] {\em Characterize the hypo-$\mathcal{ED}$ graphs $G$ with $\gamma(G) = 2$.  }
\end{itemize}

\begin{itemize}
                    \item[$\bullet$] {\em Characterize the hypo-$\mathcal{P}$ graphs $G$ for which  
                                          $\overline{G}$ is also a hypo-$\mathcal{P}$ graph. 
                                          In particular, characterize/find all self complementary hypo-$\mathcal{P}$ graphs.} 
\end{itemize}
If both $G$ and  $\overline{G}$ are  hypo-$\mathcal{P}$ graphs then, by Theorem \ref{udvc} and 
Proposition \ref{obed}, it follows that both $G$ and  $\overline{G}$ must be connected. 
Note that  $C_5$ and the {\em bull}  (the graph obtained from $K_3 \circ K_1$ by removing exactly one leaf) 
are self-complementary hypo-$\mathcal{ED}$ graphs.

\begin{itemize}
                    \item[$\bullet$] 
                    {\em Characterize the hypo-$\mathcal{ED}$ graphs $G$ such that $\overline{G}$ has an EDS.}               
\end{itemize}

If a graph $G$ is $K_{2n}$ minus a perfect matching, $n \geq 2$,  then 
we already know that $G$ is a hypo-$\mathcal{ED}$ graph.   
 Since $\overline{G}$ is a union of $n$ copies of $K_2$, $\overline{G}$ has an EDS.

\begin{itemize}
                    \item[$\bullet$] 
                    {\em Characterize the hypo-$\mathcal{UD}$ graphs $G$ such that $\overline{G}$ has a unique $\gamma$-set.}               
\end{itemize}

Brigham et al. \cite{bhhr} defined a graph $G$ to be {\em domination bicritical} 
 if $\gamma(G - S) < \gamma (G)$ for any set $S \subseteq V(G)$ of $2$ vertices.
 \begin{itemize}
                    \item[$\bullet$] \
                    {\em Does there exist a bicritical hypo-$\mathcal{UD}$ graph?}
\end{itemize}

\begin{itemize}
                    \item[$\bullet$] \
                    {\em Does there exist a hypo-$\mathcal{UD}$ graph with a cut-vertex?}
\end{itemize}

	A graph $G$ is $\gamma$-{\em EA-critical} if $\gamma(G+e) < \gamma (G)$ for each edge $e \in E(\overline{G})$. 
	Clearly if $G$   is $K_{2n}$ minus a perfect matching, $n \geq 2$, 
	then $G$ is  a hypo-$\mathcal{P}$  $\gamma$- EA-critical graph. 
	
	\begin{itemize}
                    \item[$\bullet$] \
                    {\em Find results on  the hypo-$\mathcal{P}$  $\gamma$-EA-critical graphs.}               
\end{itemize}
	
		\begin{itemize}
                    \item[$\bullet$] 
                    {\em Is it true that  $b(G) = \delta (G) + 1$ for each hypo-$\mathcal{UD}$ graph?}               
\end{itemize}
	
		Let $G$ be a graph and let  $\mathcal{L}$ and  $\mathcal{R}$ be arbitrary graph-properties. 
We define a dominating set $D$ of a graph $G$ to be  a  {\em dominating $(\mathcal{L},\mathcal{R})$-set} of $G$ 
if $D$ is a $\mathcal{L}$-set of $G$ and $V(G)-D$ is a $\mathcal{R}$-set of $G$. 
We define the  {\em domination number with respect to the ordered pair} $(\mathcal{L},\mathcal{R})$ {\em of graph-properties}, 
		denoted by  $\gamma_{(\mathcal{L},\mathcal{R})} (G)$ 		
to be  the smallest cardinality of a dominating $(\mathcal{L},\mathcal{R})$-set of $G$. 
	   A dominating $(\mathcal{L},\mathcal{R})$-set of $G$ with cardinality  $\gamma_{(\mathcal{L},\mathcal{R})}(G)$ is called a 
   $\gamma_{(\mathcal{L},\mathcal{R})}$-{\em set} of $G$. 
	Clearly $\gamma_{(\mathcal{I},\mathcal{I})} \equiv \gamma$. 
	Among the many examples of such numbers one can find in the literature are 
	the independent/total/connected/acyclic/paired/restrained/total-restrained/outer-connected domination numbers. 
	For details, see e.g. \cite{hhs1, hhs2}. 
	 We define a graph $G$ to be a  {\em hypo-unique  $(\mathcal{L},\mathcal{R})$-domination graph} if 
   $G$ has at least two  $\gamma_{(\mathcal{L},\mathcal{R})}(G)$-sets, but $G-v$ has 
	a unique minimum dominating $(\mathcal{L},\mathcal{R})$-set  for each $v\in V(G)$.  

\begin{itemize}
                    \item[$\bullet$] 
                    {\em Find results on  the  hypo-unique  $(\mathcal{L},\mathcal{R})$-domination graphs.}               
\end{itemize}

\end{document}